\newcommand{\doublehat}[1]{%
\bar{#1}}
\newcommand{\hatt}[1]{%
\hat{#1}}
\newlength\mylen
\newcolumntype{C}{>{\hfil$}p{\mylen}<{$\hfil}}
\newcolumntype{R}[1]{>{\raggedleft\let\newline\\\arraybackslash\hspace{0pt}}m{#1}}
\mathchardef\mhyphen="2D 
\newtheorem{thm}{Theorem}
\newtheorem{thm1}{Theorem}[section]
\newtheorem{cor}[thm1]{Corollary}
\newtheorem{remark}[thm1]{Remark}
\newenvironment{myproof}[1] {{\em Proof of {#1}. }}{\hfill$\square$}
\definecolor{mycolor}{rgb}{0.122, 0.435, 0.698}
\newtcbox{\mybox}{nobeforeafter,colframe=mycolor,colback=mycolor!5!white,boxrule=0.5pt,arc=4pt,boxsep=0pt,left=6pt,right=6pt,top=4pt,bottom=4pt,tcbox raise base}
\title{Splitting reactions preserves nondegenerate behaviours in chemical reaction networks}
\author{Murad Banaji\footnotemark[1]}
\begin{document}
\maketitle

\renewcommand{\thefootnote}{\fnsymbol{footnote}}

\footnotetext[1]{Middlesex University London, Department of Design Engineering and Mathematics. {\tt m.banaji@mdx.ac.uk}.}
\renewcommand{\thefootnote}{\arabic{footnote}}

\begin{abstract}
A family of results, referred to as inheritance results, tell us which enlargements of a chemical reaction network (CRN) preserve its capacity for nontrivial behaviours such as multistationarity and oscillation. In this paper, the following inheritance result is proved: under mild assumptions, splitting chemical reactions and inserting complexes involving some new chemical species preserves the capacity of a mass action CRN for multiple nondegenerate equilibria and/or periodic orbits. The claim has been proved previously for equilibria alone; however, the generalisation to include oscillation involves extensive development of rather different techniques. Several inheritance results for multistationarity and oscillation in mass action CRNs, including the main result of this paper, are gathered into a single theorem. Examples are presented showing how these results can be used together to make claims about reaction networks based on knowledge of their subnetworks. The examples include some networks of biological importance.
\end{abstract}
\begin{keywords}
Oscillation; multistationarity; chemical reaction networks; perturbation theory

\smallskip
\textbf{MSC.} 80A30; 37C25; 34D15; 92C42
\end{keywords}

\section{Introduction}

In recent years there has been intense interest in the question of when chemical reaction networks (CRNs) admit nontrivial behaviours such as multistationarity and oscillation. The motivations are often biological \cite{Tysonswitchclock}, and a great variety of mathematical techniques come into play. The work in this area includes a fairly extensive literature on necessary conditions for nontrivial behaviours, and a more modest literature on sufficient conditions for these behaviours. 

Studying oscillation in CRNs often poses greater challenges than studying equilibria. We can rule out oscillation with the help of Lyapunov functions, or using Bendixson-Dulac type criteria, or via other conditions which guarantee convergence of all orbits to equilibria. But if we wish to claim that oscillation {\em does} occur in a given CRN, the techniques available are more limited. Apart from numerical simulation, we find the following approaches in recent papers: (i) proving the occurrence of Andronov--Hopf bifurcations and other bifurcations leading to oscillations (\cite{errami2015,hellrendall2016,ConradiERK2,boroshofbauerdef1,kreusserrendall2021,banajiborosHopf} for example); (ii) use of Poincar\'e-Bendixson type theorems (e.g., \cite{HalSmithJMC,boroshofbauerplanar}); (iii) tests showing the occurrence of relaxation oscillations \cite{gedeonsontag2007}; and (iv) inheritance results as in this paper.

Theorems on inheritance in CRNs, such as those in \cite{Conradi19175,joshishiu,feliuwiufInterface2013,banajipanteaMPNE,banajiCRNosci,banajiCRNosci1,banajiboroshofbauer,GrossJMB}, provide some answers to the following question: under what circumstances can we be sure that a CRN will display some interesting dynamical behaviour, simply based on the presence of a smaller network embedded within it? The theorems also suggest potentially useful ways of decomposing complex reaction networks into simpler subnetworks. 

Inheritance results are often naturally coupled with other approaches, for example involving bifurcations. In \cite{hellrendall2016} and \cite{kreusserrendall2021} bifurcations giving rise to oscillation are proved to be inherited by relatively large networks from simpler subnetworks. This approach is also discussed and demonstrated by example in \cite{banajiboroshofbauer} and \cite{banajiborosHopf}.

One feature of inheritance results which adds to their power is their ``scalability''. Whereas directly proving the occurrence of interesting dynamical behaviours becomes increasingly difficult as network size increases, demonstrating that interesing behaviours must occur as a consequence of the presence of some subnetwork can remain tractable. For example, oscillation was found by Qiao {\em et al} \cite{Qiao.2007aa} in numerical simulations of a fairly large model of the biologically important MAPK signalling pathway; but it was later proved by Hell and Rendall \cite{hellrendall2016} that oscillations must occur in this network by inheritance from its subnetworks. In \cite{ConradiERK2}, Conradi {\em et al} observe that the size and complexity of another biologically important network, the ERK network, makes direct confirmation that it displays a Andronov--Hopf bifurcation computationally challenging. Using the theory in this paper, however, we are able to confirm in Section~\ref{secERK} that the network in question must admit oscillation as a consequence of how it is built from a smaller oscillatory network.

Another motivation for this work stems from various common practices in biological modelling. As observed in the review paper \cite{Rosenfeld2011}, a single biochemical process may often be treated as either a single reaction or a complicated system of reactions. Modellers making the choice to simplify processes, for example by omitting chemical species or reactions, are aware that they may lose interesting dynamical behaviours by doing so. However, they may hope that simplifying a network cannot {\em introduce} interesting new behaviours into models. This assumption is risky. Various counterexamples demonstrate that a simplified CRN, for example with some reactions removed, can sometimes display nontrivial behaviours which did not occur in the larger network (such examples appear in \cite{banajipanteaMPNE,banajiCRNosci,GrossJMB}). Theorems on inheritance in CRNs provide a list of allowed simplifications which are incapable of introducing complex behaviours into models of a CRN, provided various assumptions on the kinetics in the original and simplified networks are satisfied. 

We now turn to the specific motivations for the main result of this paper, namely Theorem~\ref{mainthm} below, which concerns building CRNs by adding ``intermediate'' species or complexes into reactions. In \cite{feliuwiufInterface2013}, Feliu and Wiuf observe that it is common amongst biological modellers to ignore intermediate species, especially if they are not readily measurable. Motivated by this observation, in Theorem~6 of \cite{banajipanteaMPNE} it was proved that inserting intermediate complexes involving new species into reactions preserves the capacity of a mass action CRN for multiple nondegenerate equilibria, provided the new species enter nontrivially into the enlarged system. In Theorem~\ref{mainthm} of this paper this result is extended to oscillation. Informally, we find that:
\begin{quote}
If a mass action CRN admits nondegenerate oscillation, and we replace some reactions with chains of reactions involving new species, then under mild conditions the resulting enlarged CRN admits nondegenerate oscillation. 
\end{quote}
Whereas the related result for equilibria could be proved largely by calling on the implicit function theorem, more global singular perturbation theory approaches are required to deal with periodic orbits. The usefulness of geometric singular perturbation theory (GSPT) for proving the occurrence of bifurcations and oscillations in chemical reaction networks has previously been demonstrated in both specific applications and general theorems \cite{hellrendall2016,banajiCRNosci,banajiCRNosci1,kreusserrendall2021}.

The practical utility of Theorem~\ref{mainthm}, especially when used in conjunction with previous inheritance results, is demonstrated in a corollary and via examples. In Corollary~\ref{corenz} we show how introducing ``enzymatic mechanisms'' into mass action CRNs preserves their capacity for nondegenerate multistationarity and oscillation. In Section~\ref{secERK} we are able to answer questions posed by Conradi, Obatake, and co-authors in \cite{obatake}~and~\cite{ConradiERK2} about oscillation in the biologically important ERK network. And in Section~\ref{secMAPK} we show that another biologically important network, the MAPK network with negative feedback, inherits oscillation from some of its subnetworks. The proof of Theorem~\ref{mainthm} also completes the process of proving analogues of all the results in \cite{banajipanteaMPNE} with ``multistationarity'' replaced with ``oscillation''. With the goal of providing a convenient reference to these results, in Theorem~\ref{thminherit} we summarise and slightly generalise available inheritance results for both multistationarity and oscillation in mass action systems.

\section{Statement of the main result}

Relevant background and terminology needed in order to state the main result are outlined only briefly; the reader is referred to \cite{banajiCRNosci,banajiCRNosci1} for more detail. 

Given a list of chemical species $\mathrm{X}= \mathrm{X}_1, \ldots, \mathrm{X}_n$ and a real vector $(c_1,\ldots, c_n)^\mathrm{t}$, we write $c \cdot \mathrm{X}$ for the formal sum $c_1\mathrm{X}_1+\cdots +c_n\mathrm{X}_n$, termed a {\em complex}. An irreversible chemical reaction involves conversion of one complex, termed the {\em reactant complex}, into another, termed the {\em product complex}. A chemical reaction network is a set of chemical reactions on some set of species. It is helpful to assume that both species and reactions are given some arbitrary, but fixed, ordering, and that all chemical reactions are irreversible. 

The concentration of species $\mathrm{X}_i$ is denoted by $x_i$, and the concentration vector $x:=(x_1, \ldots, x_n)^\mathrm{t}$ is assumed to vary in the {\em nonnegative orthant}, namely, $\mathbb{R}^n_{\geq 0}:=\{y \in \mathbb{R}^n\,\colon\, y_i \geq 0\,\,(i=1, \ldots, n)\}$. The interior of the nonnegative orthant, denoted by $\mathbb{R}^n_{+}$, is referred to as the {\em positive orthant}, and any subset of $\mathbb{R}^n_{+}$ is referred to as {\em positive}. 

A CRN involving $r$ reactions on species $\mathrm{X} = \mathrm{X}_1, \ldots, \mathrm{X}_n$ gives rise to an ordinary differential equation (ODE) model of the form $\dot x = \Gamma v(x)$ where: (i) $\Gamma$ is the {\em stoichiometric matrix} of the CRN with dimension $n \times r$, whose $ij$th entry tells us the net production of species $i$ in reaction $j$; and (ii) $v$ is the {\em rate function} of the reaction, always assumed to be defined (at least) on $\mathbb{R}^n_{+}$, and taking values in $\mathbb{R}^{r}$. Common choices of {\em kinetics}, namely families of functions to which $v$ may belong, are discussed in \cite{banajiCRNosci}. In a CRN with mass action kinetics, the rate of a reaction with reactant complex $c\cdot \mathrm{X}$ takes the form $kx_1^{c_1}\cdots x_n^{c_n}$, abbreviated as $k x^c$. The constant $k$ is the {\em rate constant} for the reaction. A mass action CRN is said to ``admit'' some behaviour if this behaviour occurs for some choice of rate constants.

The image of the stoichiometric matrix $\Gamma$ is termed the {\em stoichiometric subspace} of the CRN, and the dimension of $\mathrm{im}\,\Gamma$ is referred to as the {\em rank} of the CRN. The nonempty intersection of any coset of $\mathrm{im}\,\Gamma$ with $\mathbb{R}^n_{\geq 0}$ is termed a {\em stoichiometric class} of the CRN (also termed a {\em stoichiometry class} or {\em stoichiometric compatibility class}). Here we are interested in the positive parts of stoichiometric classes, termed {\em positive stoichiometric classes}. These are locally invariant, and each positive equilibrium or periodic orbit of a CRN lies in some positive stoichiometric class. 

We refer to an equilibrium or periodic orbit of a CRN as being {\em nondegenerate} (resp., {\em hyperbolic}, resp., {\em linearly stable}) if it is nondegenerate, (resp., hyperbolic, resp., linearly stable) relative to its stoichiometric class. More precisely, consider a CRN with stoichiometric matrix $\Gamma$ and some $C^1$ rate function $v$, giving the system of ODEs $\dot x = \Gamma v(x)$. An equilibrium $p$ of this system is nondegenerate (resp., hyperbolic, resp., linearly stable) if all eigenvalues of $\Gamma Dv(p)$ corresponding to generalised eigenspaces spanning $\mathrm{im}\,\Gamma$ are nonzero (resp., avoid the imaginary axis, resp., have negative real parts). A periodic orbit $\mathcal{O}$ is nondegenerate (resp., hyperbolic, resp., linearly stable) if all of its Floquet multipliers relative to $\mathrm{im}\,\Gamma$, except for the trivial multiplier which always equals $1$, are distinct from $1$ (resp., avoid the unit circle, resp., lie inside the unit circle). Note a change in terminology from \cite{banajiCRNosci,banajiCRNosci1} where a periodic orbit was referred to as ``nondegenerate'' only if it was hyperbolic relative to its stoichiometric class. 

We are now ready to state the main result. 

\begin{thm}
\label{mainthm}
Consider a CRN $\mathcal{R}$ on species $\mathrm{X} = \mathrm{X}_1, \ldots, \mathrm{X}_n$ with mass action kinetics. Let $m \geq 1$, and let $a_i \cdot \mathrm{X} \rightarrow b_i \cdot \mathrm{X}\,\, (i=1,\ldots,m)$ be any reactions of $\mathcal{R}$. Let $\mathcal{R}'$ be a new CRN created from $\mathcal{R}$ by replacing each of the reactions $a_i \cdot \mathrm{X} \rightarrow b_i \cdot \mathrm{X}$ with a chain
\[
a_i \cdot \mathrm{X} \rightarrow c_i \cdot \mathrm{X} + \beta_i\cdot \mathrm{Y} \rightarrow b_i \cdot \mathrm{X},\,\,(i=1,\ldots,m)\,.
\]
Here, $\mathrm{Y}$ is a list of $m+k$ new species ($k \geq 0$), $c_i$ and $\beta_i$ are arbitrary nonnegative vectors of lengths $n$ and $m+k$ respectively, and we assume that the new species $\mathrm{Y}$ enter nontrivially into $\mathcal{R}'$ in the sense that $\beta : = (\beta_1|\beta_2|\cdots|\beta_m)$ has rank $m$.  

Now suppose that $\mathcal{R}$ admits, on some stoichiometric class, $0 \leq r_1 < \infty$ positive nondegenerate equilibria $\mathcal{O}_1, \ldots, \mathcal{O}_{r_1}$, and $0 \leq r_2 < \infty$ positive nondegenerate periodic orbits $\mathcal{O}_{r_1+1}, \ldots, \mathcal{O}_{r_1+r_2}$. Then, with mass action kinetics, $\mathcal{R}'$ admits, on some stoichiometric class, at least $r_1$ positive nondegenerate equilibria, say, $\mathcal{O}'_1, \ldots, \mathcal{O}'_{r_1}$, and at least $r_2$ positive nondegenerate periodic orbits, say, $\mathcal{O}'_{r_1+1}, \ldots, \mathcal{O}'_{r_1+r_2}$. Rate constants for $\mathcal{R}'$ may be chosen to ensure that, whenever $\mathcal{O}_i$ was hyperbolic (resp., linearly stable), then the same holds for $\mathcal{O}'_i$.
\end{thm}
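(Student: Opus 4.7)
The natural approach is geometric singular perturbation theory (GSPT). The plan is to embed $\mathcal{R}'$ into a one-parameter family of mass action systems $\mathcal{R}'_\epsilon$, parameterised by a small $\epsilon > 0$, in which the ``second'' reactions $c_i\cdot\mathrm{X} + \beta_i\cdot\mathrm{Y} \to b_i\cdot\mathrm{X}$ become singularly fast as $\epsilon \to 0^+$, driving the intermediate species $\mathrm{Y}$ to a quasi-steady state determined by $x$. Fenichel's theorem should then furnish, for each small $\epsilon$, a locally invariant, normally hyperbolic slow manifold $M_\epsilon$, and the reduced flow on $M_\epsilon$ should be a small (in $C^1$) perturbation of the flow of $\mathcal{R}$ in the $\mathrm{X}$-coordinates. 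Nondegenerate equilibria and periodic orbits of $\mathcal{R}$ then persist as nondegenerate invariant sets of the reduced flow on $M_\epsilon$, hence of the full flow of $\mathcal{R}'_\epsilon$; the strong contraction in the fast directions ensures that hyperbolicity and linear stability are inherited whenever present.

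Concretely, I would take the rate constant of $a_i\cdot\mathrm{X}\to c_i\cdot\mathrm{X}+\beta_i\cdot\mathrm{Y}$ equal to the original rate constant $\tilde k_i$ of $a_i\cdot\mathrm{X}\to b_i\cdot\mathrm{X}$ in $\mathcal{R}$, and that of $c_i\cdot\mathrm{X}+\beta_i\cdot\mathrm{Y}\to b_i\cdot\mathrm{X}$ equal to $\tilde k_i \epsilon^{-1}$. Writing $\xi_i(x) = \tilde k_i x^{a_i}$ and $\eta_i(x,y) = \tilde k_i \epsilon^{-1} x^{c_i} y^{\beta_i}$, and collecting the unsplit reactions of $\mathcal{R}$ into $\Gamma_0 v_0(x)$, the ODEs of $\mathcal{R}'_\epsilon$ are
\[
\dot x = \Gamma_0 v_0(x) + \sum_{i=1}^{m}\bigl[\xi_i (c_i - a_i) + \eta_i (b_i - c_i)\bigr], \qquad \dot y = \beta(\xi - \eta).
\]
The rank-$m$ condition on $\beta$ allows one to solve $\beta^\mathrm{t}\alpha = \mathbf{1}$ for some $\alpha \in \mathbb{R}^{m+k}$, and the monomial change of variables $y_j = \epsilon^{\alpha_j} z_j$ then turns each equation $\xi_i = \eta_i$ into an $O(1)$ algebraic condition on $z$. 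After this rescaling the system is in standard slow--fast form, and the critical manifold $M_0$ is cut out by the $m$ independent equations $\xi_i(x) = \tilde\eta_i(x,z)$. Substituting $\xi_i = \eta_i$ into the $\dot x$ equation yields $\dot x = \Gamma_0 v_0(x) + \sum_i \tilde k_i x^{a_i}(b_i - a_i)$, which is exactly the mass action ODE of $\mathcal{R}$.

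The key technical step is verifying normal hyperbolicity of $M_0$: the linearisation of the fast vector field in the $z$ direction along $M_0$, restricted to a transversal to $\ker \beta^\mathrm{t}$ (the $k$-dimensional space carrying the conservation laws of the fast subsystem), should have spectrum in the open left half-plane. I expect this to follow from a positivity and rank argument involving $\beta$ times a positive diagonal matrix built from the monomial factors $\tilde\eta_i$ on the critical manifold, and this is the point where the rank-$m$ hypothesis enters essentially. With normal hyperbolicity in hand, Fenichel's theorem provides $M_\epsilon$, and nondegenerate equilibria and periodic orbits of $\mathcal{R}$ persist on $M_\epsilon$ via the implicit function theorem applied, respectively, to the equilibrium equation and to the Poincar\'e map of the slow flow. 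The full Floquet spectrum (resp.\ eigenvalue spectrum) of each $\mathcal{O}'_i$ then splits into a slow part, close to that of $\mathcal{O}_i$, and a strongly contracting fast part, so that hyperbolicity and linear stability are inherited. A uniform choice of conserved quantities in the fast subsystem places all $\mathcal{O}'_i$ in a single positive stoichiometric class of $\mathcal{R}'$. The main anticipated obstacles are the construction of the rescaling for arbitrary exponent vectors $\beta_i$ and the resulting normal hyperbolicity computation in this generality; these are presumably the ``rather different techniques'' alluded to in the abstract.
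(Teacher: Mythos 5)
Your overall strategy is the same as the paper's: make the inserted reactions fast, pass to a quasi-steady-state (critical) manifold for the new species, prove normal hyperbolicity via a positivity argument of the form ``$\beta$ times positive diagonal times $\beta^{\mathrm{t}}$'', invoke Fenichel to continue the nondegenerate equilibria and periodic orbits, and use the fast conservation laws plus disjoint neighbourhoods to place finitely many limit sets in one positive stoichiometric class. However, there is a genuine gap at the decisive technical step. With rate constants $\tilde k_i\epsilon^{-1}$ and the monomial rescaling $y_j=\epsilon^{\alpha_j}z_j$ with $\beta^{\mathrm{t}}\alpha=\mathbf{1}$, the rescaled system is \emph{not} in standard slow--fast form in general: the equations for the new variables read $\dot z_j=\epsilon^{-\alpha_j}\bigl[\beta(\xi-\eta)\bigr]_j$, so each component carries its own power of $\epsilon$, and the exponents $\alpha_j$ cannot in general be chosen positive, let alone equal. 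For instance, with $m=2$, $k=0$, $\beta_1=(1,0)^{\mathrm{t}}$, $\beta_2=(1,2)^{\mathrm{t}}$ (rank $2$), the condition $\beta^{\mathrm{t}}\alpha=\mathbf{1}$ forces $\alpha=(1,0)$, so $y_2$ is not rescaled at all and evolves on the slow time scale; with $\beta_2=(2,1)^{\mathrm{t}}$ one is forced to $\alpha=(1,-1)$. In such cases only part of the intended fast subsystem is actually fast, the critical manifold is no longer the equilibrium set of the fast equations, Fenichel's theorems do not apply as stated, and the reduced flow is no longer the mass action system of $\mathcal{R}$ -- so the persistence, nondegeneracy and stability conclusions do not follow from the argument as written. (When $m=1$, or when all column sums of $\beta$ coincide, your scaling can be repaired by renaming a fractional power of $\epsilon$ as the small parameter, but not in general.)

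The paper resolves precisely this point by a different allocation of the $\varepsilon$-powers: after reordering the new species so that the top $m\times m$ block $\hat{\beta}$ of $\beta$ is nonsingular, it puts the non-uniform powers into the \emph{rate constants}, taking $\ell=\bm{\varepsilon}^{-\hat{\beta}^{\mathrm{t}}}$ (in the worked example $k_1'=\varepsilon^{-2}$, $k_4'=\varepsilon^{-1}$, the column sums of $\hat{\beta}$), eliminates the remaining $k$ new species exactly via the conserved quantities $\delta^{\mathrm{t}}\hat{y}+\bar{y}=\mathbf{1}$, passes to the slow variable $z=x-\alpha\hat{\beta}^{-1}\hat{y}$ (which the fast reactions leave invariant), and rescales only the first $m$ new species uniformly, $w=\hat{y}/\varepsilon$. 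This yields honest standard form $\dot z=\Gamma v(z+\varepsilon\alpha\hat{\beta}^{-1}w)$, $\varepsilon\dot w=\hat{\beta}(\underline{v}-w^{\hat{\beta}^{\mathrm{t}}}\circ\cdots)$ with a single small parameter, a critical manifold that is a graph $w=V(z)\circ z^{\gamma}$ on which the reduced flow is \emph{exactly} the original system, and transverse eigenvalues that are real and negative. You would also need the final bookkeeping the paper carries out: choosing $\varepsilon$ small enough that the lifted orbits are positive in the original $(x,y)$ coordinates (distance of the chosen neighbourhoods from the boundary of the orthant, positivity of $\mathbf{1}-\varepsilon\delta^{\mathrm{t}}w$), and taking the minimum admissible $\varepsilon$ over the finitely many limit sets. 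So the framework is right, but the specific scaling you propose is the step that fails, and repairing it is exactly the paper's key construction.
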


Theorem~\ref{mainthm} is proved in Appendix~\ref{appproof} where an extended example is also used to illustrate the steps in the proof. Several constructions in the proof follow that of Theorem~1 in \cite{banajiCRNosci1}, although there are some important technical differences. 

\begin{remark}[The inheritance of oscillation]
\label{remnovel}
What distinguishes Theorem~\ref{mainthm} here from the related theorem for multistationarity in \cite{banajipanteaMPNE} is the {\em global} nature of the result. In order to prove that periodic orbits ``survive'' when intermediate complexes are added into a CRN, we need to be able to control the dynamics of the enlarged network not just at isolated points, but over arbitrary compact subsets of some stoichiometric class. 
\end{remark}

\begin{remark}[Going beyond mass action kinetics]
\label{remkin}
Whereas, for simplicity, Theorem~\ref{mainthm} is stated for networks with mass action kinetics, its proof implies an immediate extension. In the proof, the reactions with new reactant complexes, inserted in the splitting process, are required to have mass action kinetics; however, the original rates of reaction of the CRN prior to enlargement can be drawn from any class of sufficiently differentiable functions which are positive on the positive orthant. It follows that the conclusions of the theorem hold if we replace ``mass action'' with any sufficiently differentiable class of rate functions which include mass action rate functions as a special case -- for example, positive general kinetics (see \cite{banajiCRNosci} for a definition).
\end{remark}

\begin{remark}[Applications of Theorem~\ref{mainthm} which are not immediately obvious]
\label{remcons}
Following Remark~4.4 in \cite{banajipanteaMPNE}, the scope of the theorem is broader than it at first appears. For example, we may consider a single reaction $a_i \cdot \mathrm{X} \rightarrow b_i\cdot \mathrm{X}$ as a set of $m$ such reactions, each with rate $\frac{1}{m}$ times the original rate. This does not affect the associated differential equations and can be done while remaining in any reasonable class of kinetics (and in particular mass action kinetics). With this preliminary step, a single reaction $a_i \cdot \mathrm{X} \rightarrow b_i\cdot \mathrm{X}$ may be split multiple times and acquire multiple intermediate complexes. Another construction we may employ is to first add a trivial reaction $a_i \cdot \mathrm{X} \rightarrow a_i\cdot \mathrm{X}$ to $\mathcal{R}$ which has no effect on the dynamics, and then ``split'' this trivial reaction using Theorem~\ref{mainthm}; the net effect is to add the reversible reaction $a_i \cdot \mathrm{X} \rightleftharpoons c_i\cdot \mathrm{X} + \beta_i\cdot \mathrm{Y}$ to $\mathcal{R}$. Thus some instances of Theorem~5 in \cite{banajipanteaMPNE} and Theorem~1 in \cite{banajiCRNosci1} follow as consequences of Theorem~\ref{mainthm} here. 
\end{remark}

\begin{remark}[The condition that the new species figure nontrivially in the enlarged CRN]
\label{remnewspec}
The condition in Theorem~\ref{mainthm} that the matrix $\beta$ has rank $m$ also appears in Theorem~1 of \cite{banajiCRNosci1} and Theorems~5~and~6~of~\cite{banajipanteaMPNE}. In all cases, it is equivalent to the requirement that the submatrix of the new stoichiometric matrix corresponding to the added species must have rank $m$, where $m$ is the number of reactions which are split (here and in Theorem~6~of~\cite{banajipanteaMPNE}), or the number of reversible reactions added (in \cite{banajiCRNosci1} and in Theorem~5~of~\cite{banajipanteaMPNE}). Although it implies that the rank of the CRN as a whole increases by $m$, it is {\em not} equivalent to this condition. 
\end{remark}

\section{A summary of some inheritance results in a single theorem}

From here on, given two mass action CRNs $\mathcal{R}$ and $\mathcal{R}'$, we use the phrase 
\begin{quote}
``$\mathcal{R}'$ inherits nondegenerate equilibria and periodic orbits from $\mathcal{R}$''
\end{quote}
to signify the conclusion of Theorem~\ref{mainthm}. It means that if $\mathcal{R}$ admits, on some stoichiometric class, $r_1$ positive nondegenerate equilibria and $r_2$ positive nondegenerate periodic orbits, then the same holds for $\mathcal{R}'$. Moreover the $r_1+r_2$ nondegenerate equilibria and periodic orbits constructed on some stoichiometric class of $\mathcal{R}'$ are in natural one-to-one correspondence with those of $\mathcal{R}$, and hyperbolicity/linear stability of any one of these limit sets for $\mathcal{R}$ (for some choice of rate constants) implies the same for the corresponding limit set of $\mathcal{R}'$ (for some choice of rate constants).

Consider a CRN $\mathcal{R}$ and the following six enlargements:
\begin{enumerate}
\item[E1.] {\em A new linearly dependent reaction.} We add to $\mathcal{R}$ a new reaction involving only existing chemical species of $\mathcal{R}$, and in such a way that the rank of $\mathcal{R}$ remains unchanged.
\item[E2.] {\em The fully open extension.} We add in (if absent) all chemical reactions of the form $0 \rightarrow \mathrm{X}_i$ and $\mathrm{X}_i \rightarrow 0$ for each chemical species $\mathrm{X}_i$ of $\mathcal{R}$.
\item[E3.] {\em A new linearly dependent species.} We add a new chemical species into the reactions of $\mathcal{R}$, in such a way that the rank of $\mathcal{R}$ remains unchanged.
\item[E4.] {\em A new species and its inflow-outflow.} We add a new chemical species, say $\mathrm{Y}$, into some or all reactions of $\mathcal{R}$, and also add the inflow and outflow reactions $0 \rightarrow \mathrm{Y}$ and $\mathrm{Y} \rightarrow 0$. 
\item[E5.] {\em New reversible reactions involving new species.} We add $m \geq 1$ new reversible reactions into $\mathcal{R}$ involving $m+k$ new chemical species ($k \geq 0$). Moreover, the new species figure nontrivially in the enlarged CRN in the sense of Remark~\ref{remnewspec}.
\item[E6.] {\em Splitting reactions.} We split $m \geq 1$ chemical reactions of $\mathcal{R}$ and insert complexes involving $m+k$ new species ($k \geq 0$). Moreover, the new species figure nontrivially in the enlarged CRN in the sense of Remark~\ref{remnewspec}.
\end{enumerate}

The following summary of results is presented under the assumption of mass action kinetics, although many of the individual results can be proved under a variety of other kinetic assumptions. 

\begin{thm}
\label{thminherit}
Let $\mathcal{R}$ be a CRN with mass action kinetics, and let $\mathcal{R}'$ be a CRN obtained from $\mathcal{R}$ by any finite sequence, possibly empty, of enlargements E1--E6 above. Then $\mathcal{R}'$ inherits nondegenerate equilibria and periodic orbits from $\mathcal{R}$.
\end{thm}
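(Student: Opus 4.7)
The plan is to proceed by induction on the length of the enlargement sequence, reducing the theorem to the claim that each individual enlargement Ei, for $i \in \{1,\ldots,6\}$, produces a network that inherits nondegenerate equilibria and periodic orbits from its input. The base case, where the sequence is empty, is trivial since then $\mathcal{R}' = \mathcal{R}$.

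For the inductive step, the key preliminary observation is that the inheritance relation, as defined immediately before the theorem statement, is transitive. Indeed, if $\mathcal{R}'$ inherits from $\mathcal{R}$ and $\mathcal{R}''$ inherits from $\mathcal{R}'$, then for any $r_1$ nondegenerate equilibria and $r_2$ nondegenerate periodic orbits of $\mathcal{R}$ on some stoichiometric class, a first application of the inheritance hypothesis produces $r_1 + r_2$ corresponding limit sets of $\mathcal{R}'$ (for suitable rate constants, and on some stoichiometric class); a second application promotes these to $r_1 + r_2$ corresponding limit sets of $\mathcal{R}''$. Composing the two one-to-one correspondences yields a one-to-one correspondence between the original limit sets of $\mathcal{R}$ and limit sets of $\mathcal{R}''$, and the statement about preservation of hyperbolicity and linear stability transfers through each step by construction. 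A standard induction then upgrades the single-step claim to any finite sequence of enlargements.

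It remains to verify the six single-step claims. Enlargement E6 is exactly Theorem~\ref{mainthm} of the present paper. Enlargement E5 follows from E6 via the construction described in Remark~\ref{remcons}: one adds trivial reactions of the form $a_i\cdot\mathrm{X} \rightarrow a_i\cdot\mathrm{X}$, which do not change the associated ODEs, and then splits them to create reversible reactions with the desired new complexes. Enlargements E1–E4 are available in the literature, with the multistationarity components established in \cite{banajipanteaMPNE} and the oscillation components in \cite{banajiCRNosci,banajiCRNosci1}. For each of E1–E4, I would verify that the perturbation arguments in these references extend to simultaneous inheritance of finitely many limit sets of mixed type, together with hyperbolicity and linear stability tracking in the sense defined in Section~2.

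The main obstacle is this last verification: each cited result typically treats one flavour of limit set at a time (equilibrium or periodic orbit) and may state weaker stability conclusions than what is needed here. The plan is to observe that, for each of E1–E4, the argument proceeds either by the implicit function theorem applied at each equilibrium or by a Floquet-type perturbation argument applied at each periodic orbit; both are local spectral perturbation arguments that apply independently to each limit set in a finite list, so assembling them across the finite list of $r_1 + r_2$ given limit sets presents no further difficulty. The uniform control needed globally is only that of small perturbations of the vector field, which is available in all four cases. Once these verifications are in place, the theorem follows by induction as described above.
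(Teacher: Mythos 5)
Your overall architecture (transitivity of the inheritance relation plus verification of the six single-step claims, with E6 supplied by Theorem~\ref{mainthm}) matches the paper's proof, but two of your single-step reductions have genuine gaps. First, E5 does not follow from E6 via the trivial-reaction trick of Remark~\ref{remcons} in general: splitting a trivial reaction $a_i\cdot\mathrm{X}\rightarrow a_i\cdot\mathrm{X}$ only produces reversible reactions of the form $a_i\cdot\mathrm{X}\rightleftharpoons c_i\cdot\mathrm{X}+\beta_i\cdot\mathrm{Y}$, i.e.\ with one side free of new species, whereas E5 allows new reversible reactions whose \emph{both} sides involve new species (subject only to the rank condition of Remark~\ref{remnewspec}). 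This is exactly why Remark~\ref{remcons} says only ``some instances'' of those earlier theorems follow from Theorem~\ref{mainthm}; for general E5 you must invoke Theorem~5 of \cite{banajipanteaMPNE} (equilibria) and Theorem~1 of \cite{banajiCRNosci1} (periodic orbits), as the paper does. Second, your attribution for E1--E4 omits E3 entirely: adding a linearly dependent species is not treated in \cite{banajipanteaMPNE} or \cite{banajiCRNosci,banajiCRNosci1}; it is Theorem~1 of \cite{banajiboroshofbauer} that covers E3, for both equilibria and periodic orbits, so as written your proof has no source for that enlargement.

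There is also a substantive inaccuracy in how you propose to upgrade the cited results. You claim that for E1--E4 ``the uniform control needed globally is only that of small perturbations of the vector field,'' but E4 is a \emph{singular} perturbation problem: the phase space gains a dimension and the new rate constants blow up, so the lifted equilibria and periodic orbits acquire additional eigenvalues and Floquet multipliers transverse to a slow manifold. Nondegeneracy, hyperbolicity and linear stability relative to the new (larger-rank) stoichiometric class are therefore not automatic from a local spectral perturbation argument; one must check, as the paper does (and as in the proof of Theorem~\ref{mainthm}), that these extra eigenvalues lie in the open left half-plane (resp.\ the extra multipliers inside the unit circle). A similar, milder point applies to E2, where the rank also changes and a direct calculation is needed. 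Your mechanism for handling finitely many coexisting limit sets of mixed type is fine in spirit, but the correct formulation is the paper's: surround the limit sets by disjoint compact positive neighbourhoods on their stoichiometric class, run the continuation argument in each, and take the continuation parameter below the minimum of the finitely many thresholds (including those needed for hyperbolicity or linear stability), rather than appealing to smallness of the perturbation of the vector field, which is unavailable in the singular cases.
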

\begin{proof}
The result follows from the proofs of several theorems including Theorem~\ref{mainthm} above. In particular:
\begin{itemize}
\item Theorem~1 in \cite{banajiboroshofbauer} treats enlargement E3 for equilibria and periodic orbits.
\item Theorems~1, 2, 4, 5, and 6 in \cite{banajipanteaMPNE} treat enlargements E1, E2, E4, E5 and E6 for equilbiria.
\item Theorems~1, 2 and 4 in \cite{banajiCRNosci} treat enlargements E1, E2, and E4 for periodic orbits.
\item Theorem~1 in \cite{banajiCRNosci1} treats enlargement E5 for periodic orbits.
\item Theorem~1 here treats enlargement E6 for periodic orbits (and equilibria).
\end{itemize}
Several of the previous results were stated in more restricted terms, and combining them into the present claim requires a couple of easy observations. 

\begin{enumerate}[align=left,leftmargin=*]
\item Results were stated for nondegenerate and linearly stable equilibria in \cite{banajipanteaMPNE}, and for periodic orbits which were hyperbolic or linearly stable in \cite{banajiCRNosci,banajiCRNosci1}. It is easy to see that all the proofs of existence of limit sets in the enlarged CRNs were continuation results relying only on nondegeneracy of the limit sets in the sense used here. Further, provided we choose the continuation parameter sufficiently small, then the proofs demonstrated that hyperbolic (resp., linearly stable) equilibria and periodic orbits remain hyperbolic (resp., linearly stable) in the enlarged CRN. This claim is easy for enlargements E1 and E3 which preserve the rank of the CRN, and some direct calculation suffices in the case of enlargement E2. On the other hand, enlargements E4, E5 and E6 give rise to singular perturbation problems \cite{JonesSingular} and the lifted equilibria (resp., periodic orbits) of the enlarged systems have additional eigenvalues (resp., Floquet multipliers) corresponding to directions transverse to a slow manifold. However, as in the proof of Theorem~\ref{mainthm} in Appendix~\ref{appproof}, the additional nontrivial eigenvalues (resp., Floquet multipliers) are always in the left half plane (resp., inside the unit circle) in $\mathbb{C}$. 

\item Results in \cite{banajipanteaMPNE} were stated for two equilibria; and results in \cite{banajiCRNosci,banajiCRNosci1} were stated for a single periodic orbit. In order to combine these into a single result involving an arbitrary finite number of equilibria and/or periodic orbits on some stoichiometric class, we can follow the approach in the proof of Theorem~1 in \cite{banajiboroshofbauer} and Theorem~\ref{mainthm} here. In brief, we surround the nondegenerate equilibria and periodic orbits by disjoint, compact, positive neighbourhoods on their stoichiometric class, apply the techniques of proof within each of these neighbourhoods, and take as an upper bound on our continuation parameter the minimum of the bounds required for continuation (and, if required, hyperbolicity or linear stability) of each limit set. 
\end{enumerate}
\end{proof}

\subsection{Enzymatic mechanisms}

The following result is an easy corollary of Theorem~\ref{thminherit}, and extends Corollary~4.7 in \cite{banajipanteaMPNE}. It tells us that replacing a set of reactions in a mass action CRN with enzyme mediated mechanisms preserves the CRN's capacity for nondegenerate multistationarity or oscillation. In fact, a single enzyme may mediate more than one process. 

\begin{cor}[Adding enzymatic mechanisms]
\label{corenz}
Let $\mathcal{R}$ be a CRN with mass action kinetics on species $\mathrm{X} = \mathrm{X}_1, \ldots, \mathrm{X}_n$. Let $a_i \cdot \mathrm{X} \rightarrow b_i \cdot \mathrm{X}\,\, (i=1,\ldots,m)$ be any $m$ reactions in $\mathcal{R}$. Let $\mathrm{E}, \mathrm{I}_1, \ldots, \mathrm{I}_m$ be new species, not occurring in $\cal R$, and let $c_i \ge 0$ ($i = 1, \ldots, m$). Suppose we create ${\cal R}'$ from ${\cal R}$ by replacing each of the reactions
\[
a_i\cdot \mathrm{X}\to b_i\cdot \mathrm{X} \quad \mbox{in $\mathcal{R}$ with a chain} \quad c_i\mathrm{E}+ a_i\cdot \mathrm{X}\rightleftharpoons \mathrm{I}_i\to  c_i\mathrm{E}+b_i\cdot \mathrm{X},\,\,(i=1,\ldots,m)\,.
\]
Then $\mathcal{R}'$ inherits nondegenerate equilibria and periodic orbits from $\mathcal{R}$.
\end{cor}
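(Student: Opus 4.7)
The plan is to derive Corollary~\ref{corenz} by exhibiting a finite sequence of enlargements of types E6, E3, and E1 that transforms $\mathcal{R}$ into $\mathcal{R}'$, and then invoking Theorem~\ref{thminherit}. First, I apply enlargement E6 (splitting reactions) once, splitting each $a_i \cdot \mathrm{X} \to b_i \cdot \mathrm{X}$ as $a_i \cdot \mathrm{X} \to \mathrm{I}_i \to b_i \cdot \mathrm{X}$, by taking the $\mathrm{X}$-part of the intermediate complex to be zero and $\beta_i = e_i$, the $i$th standard basis vector in $\mathbb{R}^m$. The required condition $\mathrm{rank}\,\beta = m$ is immediate since $\beta = I_m$. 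Call the resulting network $\mathcal{R}_1$; its new species are $\mathrm{I}_1, \ldots, \mathrm{I}_m$.

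Second, I apply enlargement E3 to introduce the single new species $\mathrm{E}$, inserting $c_i \mathrm{E}$ into the reactant complex of $a_i \cdot \mathrm{X} \to \mathrm{I}_i$ and into the product complex of $\mathrm{I}_i \to b_i \cdot \mathrm{X}$. To verify that E3 applies, observe that in $\mathcal{R}_1$ the row of the stoichiometric matrix for $\mathrm{I}_i$ has entries $(+1,-1)$ on the two reactions of the $i$th chain and zero elsewhere, while the candidate new row for $\mathrm{E}$ has entries $(-c_i,+c_i)$ on that same pair and zero elsewhere; hence the $\mathrm{E}$-row equals $\sum_{i=1}^{m}(-c_i)\,(\mathrm{I}_i\text{-row})$ and lies in the row span of the existing matrix, so adding $\mathrm{E}$ preserves the rank. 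Call this network $\mathcal{R}_2$.

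Third, I apply enlargement E1, $m$ times in succession, to add the reverse reactions $\mathrm{I}_i \to c_i \mathrm{E} + a_i \cdot \mathrm{X}$ for $i = 1, \ldots, m$. Each such reaction involves only species already present in $\mathcal{R}_2$, and its stoichiometric column is precisely the negative of the column of the already-present reaction $c_i \mathrm{E} + a_i \cdot \mathrm{X} \to \mathrm{I}_i$, so each addition preserves the rank. The resulting network coincides with $\mathcal{R}'$, and Theorem~\ref{thminherit} delivers the inheritance claim.

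The argument is essentially bookkeeping and I do not anticipate any real obstacle; the only nontrivial content consists of the two short rank-preservation checks above. The main design choice is the ordering of the enlargements, with $\mathrm{I}_i$ introduced via E6 before $\mathrm{E}$ is attached via E3, since this separation is what allows both linear-dependence conditions to reduce to elementary scalar-multiple identities.
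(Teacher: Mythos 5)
Your decomposition into enlargements E6, E3, and E1 is correct and is in substance the same as the paper's own proof; the only difference is the ordering, since the paper first applies E3 (adding $\mathrm{E}$ to both sides of $a_i\cdot \mathrm{X}\to b_i\cdot \mathrm{X}$, which makes the new $\mathrm{E}$-row identically zero), then E6 to insert $\mathrm{I}_i$, then E1 for the reverse reactions, whereas you split first and attach $\mathrm{E}$ afterwards, which merely shifts the rank-preservation check to the scalar-multiple identity you verify for the $\mathrm{E}$-row. Both orderings are valid, and the conclusion then follows from Theorem~\ref{thminherit} exactly as you state.
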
 

\begin{proof}
We successively enlarge $\mathcal{R}$, eventually obtaining $\mathcal{R}'$, and using only enlargements E3, E6 and E1 above as follows:
\begin{enumerate}[align=left,leftmargin=*]
\item Let $\mathcal{R}^*$ be the CRN created from $\mathcal{R}$ by adding the species $\mathrm{E}$ into both sides of the relevant reactions of $\mathcal{R}$, namely:
\[
c_i\mathrm{E}+ a_i\cdot \mathrm{X}\to  c_i\mathrm{E}+b_i\cdot \mathrm{X}\,\, \text{ replaces }\,\, a_i\cdot \mathrm{X}\to b_i\cdot \mathrm{X},\quad (i=1,\ldots,m).
\]
Note that this process leaves the rank of $\mathcal{R}$ unchanged, and so is an instance of enlargement E3. 
\item Let $\mathcal{R}^{**}$ be the CRN created from $\mathcal{R}^*$ by adding the intermediate species $\mathrm{I}_i$, namely
\[
c_i\mathrm{E}+ a_i\cdot \mathrm{X}\to \mathrm{I}_i\to  c_i\mathrm{E}+b_i\cdot \mathrm{X}\,\, \text{ replaces } \,\, c_i\mathrm{E}+ a_i\cdot \mathrm{X}\to  c_i\mathrm{E}+b_i\cdot \mathrm{X},\quad (i=1,\ldots,m).
\]
This corresponds to enlargement E6; the nondegeneracy condition is easily seen to be satisfied. 
\item Finally, $\mathcal{R}'$ is obtained from $\mathcal{R}^{**}$ by adding the reverse reactions $\mathrm{I}_i\to c_i\mathrm{E}+a_i\cdot \mathrm{X}$, namely:
\[
c_i\mathrm{E}+ a_i\cdot \mathrm{X}\rightleftharpoons \mathrm{I}_i \,\, \text{ replaces }\,\, c_i\mathrm{E}+ a_i\cdot \mathrm{X}\to \mathrm{I}_i,\quad (i=1,\ldots,m)\,.
\]
This involves $m$ applications of enlargement E1, as adding the reverse of a reaction does not affect the rank of a CRN.
\end{enumerate}
The result is now immediate from Theorem~\ref{thminherit}. The proof is illustrated in Figure~\ref{figenz}.
\end{proof}

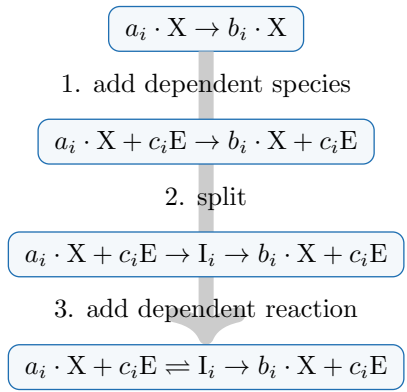
\begin{figure}[h]
\begin{center}
\begin{tikzpicture}[domain=0:4,scale=0.75]

\node at (0,6) {\mybox{$a_i\cdot \mathrm{X} \rightarrow b_i\cdot \mathrm{X}$}};
\draw[->,black!20,line width=0.2cm] (0,5.6) -- (0,0.4);

\node at (0,5) {1. add dependent species};
\node at (0,4) {\mybox{$a_i\cdot \mathrm{X} + c_i\mathrm{E} \rightarrow b_i\cdot \mathrm{X} + c_i\mathrm{E}$}};
\node at (0,3) {2. split};
\node at (0,2) {\mybox{$a_i\cdot \mathrm{X} + c_i\mathrm{E} \rightarrow \mathrm{I}_i \rightarrow b_i\cdot \mathrm{X} + c_i\mathrm{E}$}};
\node at (0,1) {3. add dependent reaction};

\node at (0,0) {\mybox{$a_i\cdot \mathrm{X} + c_i\mathrm{E} \rightleftharpoons \mathrm{I}_i \rightarrow b_i\cdot \mathrm{X} + c_i\mathrm{E}$}};

\end{tikzpicture}
\end{center}
\caption{\label{figenz} A schematic of the proof of Corollary~\ref{corenz}. The process of adding enzymatic mechanisms to a reaction network breaks down into three steps: adding a dependent species, adding intermediates into reactions, and adding dependent reactions.}
\end{figure}

\begin{remark}
Corollary~\ref{corenz} has the following interpretation in terms of modelling CRNs arising in biochemistry. It tells us that if a mass action CRN does {\em not} admit nondegenerate oscillation or multistationarity, and we caricature the CRN by collapsing enzyme-mediated processes into single reactions, then we cannot introduce nondegenerate multistationarity or oscillation by this simplification, provided we use mass action kinetics for the collapsed reactions. Note that if an enzyme is presumed to mediate more than one reaction, then we must simultaneously collapse {\em all} mechanisms in which this enzyme figures for this claim to hold. Note also that Corollary~\ref{corenz} does not guarantee that replacing an enzymatic mechanism with a Michaelis-Menten scheme cannot introduce nondegenerate multistationarity or oscillation.
\end{remark}

\section{Examples}

We present four examples applying the results above. While the Examples in Sections~\ref{secbasicex}~and~\ref{secmultiinherit} involve small networks, and are designed to illustrate the key ideas behind Theorem~\ref{mainthm} and its proof, the examples in Sections~\ref{secERK}~and~\ref{secMAPK} focus on systems of biological importance, and use combinations of the results gathered in Theorem~\ref{thminherit}. The latter two examples involve fairly large CRNs, and so demonstrate the natural ``scalability'' of inheritance results, mentioned in the introduction. The main focus is on the inheritance of oscillation; however, the example in Section~\ref{secmultiinherit} uses more of the power of Theorem~\ref{mainthm}, demonstrating the inheritance of multiple, coexisting, nondegenerate, omega limit sets, including both periodic orbits and equilibria.

\subsection{A basic example illustrating Theorem~\ref{mainthm}}
\label{secbasicex}

The following CRN appeared in \cite{banajiCRNosci} as an example of a CRN admitting stable oscillation with mass action kinetics. 

\begin{tikzpicture}
\node at (-4,-0.1) {$(\mathcal{R}_1)$};
\node at (0,0) {$\mathrm{X}+\mathrm{Y} \,\overset{\scriptstyle{k_1}}\longrightarrow\, 2\mathrm{Y}$,};
\node at (3.5,-0.1) {$\mathrm{Y}+\mathrm{Z}\, \overset{\scriptstyle{k_2}}\longrightarrow\, \mathrm{X} \,\, \xrightleftharpoons[\,\,k_3\,\,]{k_4}\,\, 0$};
\node[rotate=45] at (5.8,0.5) {$\xrightleftharpoons[\,\,k_6\,\,]{k_5}\,\, \mathrm{Y}$};
\node[rotate=-45] at (5.8,-0.7) {$\xrightleftharpoons[\,\,k_8\,\,]{k_7}\,\, \mathrm{Z}$};
\end{tikzpicture}

The reactions are labelled with their rate constants. Setting $k_1=0.5$, $k_2=3.0$, $k_3=2.5$, $k_4=0.2$, $k_5=0.6$, $k_6=2.4$, $k_7=1.8$ and $k_8=0.4$, and choosing initial conditions $\mathrm{X}_0=\mathrm{Y}_0=\mathrm{Z}_0=1$, in numerical simulations the system settles, after initial transient behaviour, onto the periodic orbit shown in Figure~\ref{fig1} below. This is assumed to be a genuine periodic orbit, linearly stable relative to its stoichiometric class (which in this case is the entire nonnegative orthant).

\begin{figure}[h]
\begin{minipage}{0.48\textwidth}
\begin{center}
\includegraphics[scale=.3]{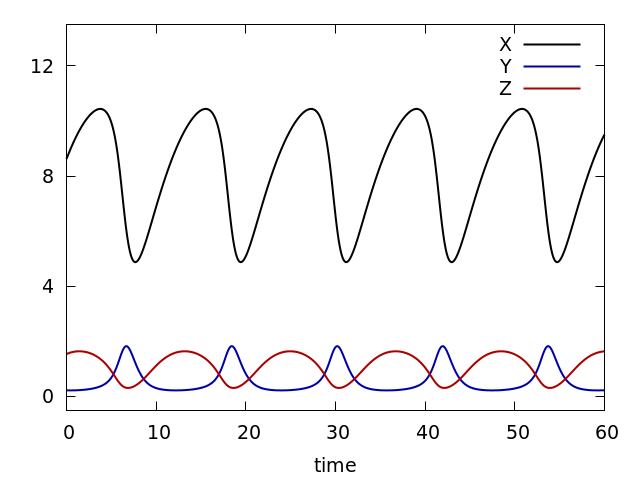}
\end{center}
\end{minipage}
\hfill
\begin{minipage}{0.48\textwidth}
\begin{center}
\includegraphics[scale=.3]{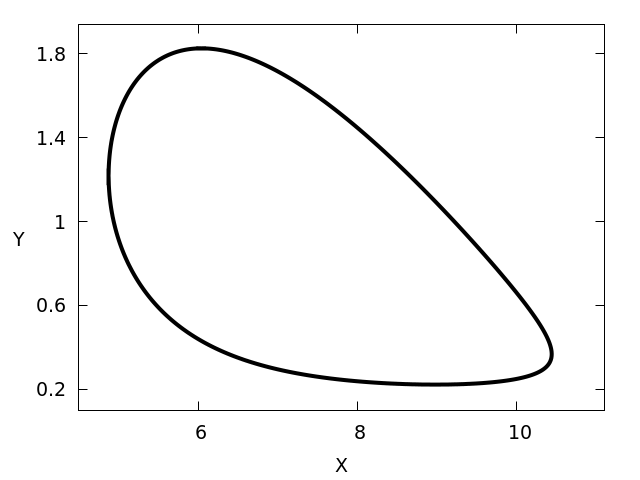}
\end{center}
\end{minipage}
\caption{\label{fig1} Numerical simulation of $\mathcal{R}_1$ with mass action kinetics and rate constants and initial conditions as given in the text. {\em Left.} Evolution of the concentrations of $\mathrm{X}$, $\mathrm{Y}$ and $\mathrm{Z}$. {\em Right.} The projection of the periodic orbit onto $\mathrm{X}\mhyphen \mathrm{Y}$ coordinates.}
\end{figure}

We now split two of the reactions in $\mathcal{R}_1$, namely $\mathrm{X}+\mathrm{Y} \rightarrow 2\mathrm{Y}$ and $\mathrm{X} \rightarrow 0$, and insert intermediate complexes involving three new species $\mathrm{U}, \mathrm{V}$ and $\mathrm{W}$ to obtain the CRN

\begin{tikzpicture}
\node at (-6,-0.1) {$(\mathcal{R}_2)$};
\node at (-2,0) {$\mathrm{X}+\mathrm{Y} \,\overset{\scriptstyle{k_1}}\longrightarrow\, \mathrm{Z} + 2\mathrm{U} \overset{\scriptstyle{k_1'}}\longrightarrow 2\mathrm{Y}$,};
\node at (1.7,0) {$\mathrm{Y}+\mathrm{Z}\, \overset{\scriptstyle{k_2}}\longrightarrow\, \mathrm{X}$};
\draw[->] (2.9,0.1)--(3.8,0.5);
\node at (4,0.8) {$\mathrm{V}+\mathrm{W}$};
\draw[->] (4.2,0.5)--(5.1,0.1);
\draw[->] (5.0,-0.1)--(3.0,-0.1);
\node[rotate=30] at (3.2,0.45) {$\scriptstyle{k_4}$};
\node[rotate=-30] at (4.8,0.45) {$\scriptstyle{k_4'}$};
\node at (4,-0.3) {$\scriptstyle{k_3}$};

\node at (5.35,-0.1) {$0$};
\node[rotate=45] at (6.0,0.5) {$\xrightleftharpoons[\,\,k_6\,\,]{k_5}\,\, \mathrm{Y}$};
\node[rotate=-45] at (6.0,-0.7) {$\xrightleftharpoons[\,\,k_8\,\,]{k_7}\,\, \mathrm{Z}$};
\end{tikzpicture}


Note that $\mathcal{R}_2$ now has a conserved quantity $\mathrm{W}-\mathrm{V}$. The matrix $\beta$ representing the stoichiometries of the new species in the added complexes is 
\[
\left(\begin{array}{rr}2&0\\0&1\\0&1\end{array}\right)\,,
\]
which clearly has rank $2$. Thus the nondegeneracy condition in Theorem~\ref{mainthm} is satisfied. Following the proof in Appendix~\ref{appproof}, we define a positive parameter $\varepsilon$, and set $k_1'=\varepsilon^{-2}$ and $k_4'=\varepsilon^{-1}$ ($2$ and $1$ are the column sums of the top $2 \times 2$ block of $\beta$). The proof tells us that choosing and fixing $\varepsilon>0$ sufficiently small, while keeping rate constants $k_1, \ldots, k_8$ at their previous values, ensures that $\mathcal{R}_2$ has a positive periodic orbit which is linearly stable relative to its stoichiometric class. Some plots of the periodic orbit in the case $\varepsilon=0.1$, and with initial values satisfying $\mathrm{W}_0-\mathrm{V}_0=1$, are shown in Figure~\ref{fig2} below. 

\begin{figure}[h]
\begin{minipage}{0.48\textwidth}
\begin{center}
\includegraphics[scale=.3]{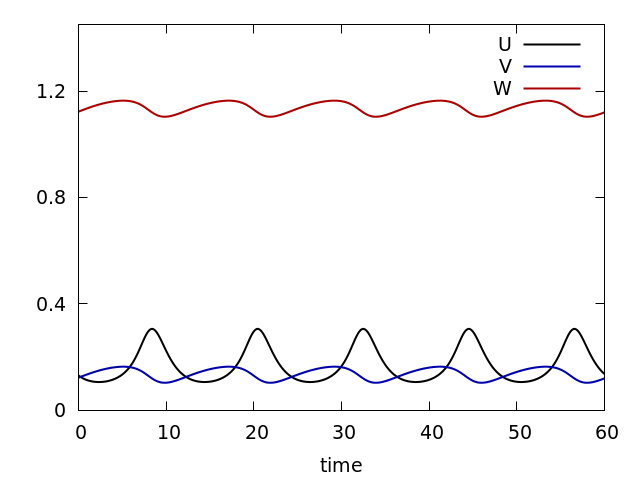}
\end{center}
\end{minipage}
\hfill
\begin{minipage}{0.48\textwidth}
\begin{center}
\includegraphics[scale=.3]{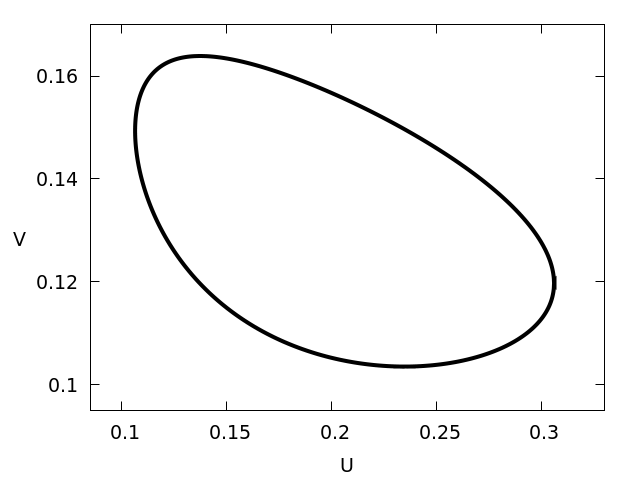}
\end{center}
\end{minipage}
\caption{\label{fig2} Numerical simulation of $\mathcal{R}_2$ with mass action kinetics and rate constants and initial conditions as given in the text. {\em Left.} Evolution of the concentrations of $\mathrm{U}$, $\mathrm{V}$ and $\mathrm{W}$. {\em Right.} The projection of the periodic orbit onto $\mathrm{U}\mhyphen \mathrm{V}$ coordinates.}
\end{figure}

A detailed analysis of this example is given in Appendix~\ref{appproof} after the proof of Theorem~\ref{mainthm}, where it is used to illustrate the proof.

\subsection{The inheritance of multiple nondegenerate limit sets}
\label{secmultiinherit}

Consider the following $3$-species, $4$-reaction, bimolecular CRN:
\begin{equation}
\tag{\mbox{$\mathcal{R}_3$}}
0 \overset{k_1}{\longrightarrow} \mathrm{X}, \quad \mathrm{X}+\mathrm{Y} \overset{k_2}{\longrightarrow} 2\mathrm{Y}, \quad \mathrm{Y} \overset{k_3}{\longrightarrow} 2\mathrm{Z}, \quad \mathrm{X}+\mathrm{Z} \overset{k_4}{\longrightarrow} 0\,.
\end{equation}
In \cite{banajiborosHopf} it was shown, by proving the occurrence of a so-called Bautin bifurcation in the system, that there exist mass action rate constants, say $k_1^*,\, k_2^*,\, k_3^*,\, k_4^*$, at which $\mathcal{R}_3$ simultaneously admits (i) a linearly stable equilibrium; (ii) a linearly stable periodic orbit; and (iii) a nondegenerate, unstable periodic orbit. In fact, $\mathcal{R}_3$ is one of many $3$-species, $4$-reaction, bimolecular, mass action CRNs which are proved, via examination of bifurcations of co-dimension $2$, to allow the coexistence of multiple nondegenerate limit sets. Let us now ``split'' the reaction $\mathrm{X}+\mathrm{Y} \longrightarrow 2\mathrm{Y}$ and introduce a new complex consisting of a single new species $\mathrm{W}$ to obtain the enlarged CRN
\begin{equation}
\tag{\mbox{$\mathcal{R}_4$}}
0 \overset{k'_1}{\longrightarrow} \mathrm{X}, \quad \mathrm{X}+\mathrm{Y} \overset{k'_2}{\longrightarrow} \mathrm{W} \overset{k'_5}{\longrightarrow} 2\mathrm{Y}, \quad \mathrm{Y} \overset{k'_3}{\longrightarrow} 2\mathrm{Z}, \quad \mathrm{X}+\mathrm{Z} \overset{k'_4}{\longrightarrow} 0\,.
\end{equation}
Theorem~\ref{mainthm} then tells us that $\mathcal{R}_4$ admits, simultaneously, a linearly stable equilibrium, a linearly stable periodic orbit, and an unstable periodic orbit. Moreover, we obtain rate constants at which this occurs by fixing $k'_1 = k_1^*,\, k'_2 = k_2^*,\, k'_3 = k_3^*,\, k'_4 = k_4^*$, defining a new, positive parameter $\varepsilon$, setting $k'_5 = \varepsilon^{-1}$, and choosing $\varepsilon$ sufficiently small.

\subsection{A biologically important network: ERK regulation}
\label{secERK}

In \cite{obatake,ConradiERK2}, dynamical behaviours in various models for the phosphorylation and dephosphorylation of extracellular signal-regulated kinase (ERK) are analysed. The authors are interested in how model simplifications preserve the capacity of models of this system for oscillation and multistationarity, and at which parameter values these behaviours must occur in the models. Amongst the networks discussed are the following two:

\begin{enumerate}[align=left,leftmargin=*,itemsep=2ex,parsep=1ex]

%
%
%
%
%
%

\item The reduced ERK network:

\begin{tikzpicture}[scale=0.82]

\node at (-0.2,0) {$\mathrm{S_{00}+E}$};
\node at (1.0,0.3) {$\scriptstyle{k_1}$};
\node at (1.0,0) {$\longrightarrow$};
\node at (2,0) {$\mathrm{S_{00}E}$};
\node at (3.0,0.3) {$\scriptstyle{k_3}$};
\node at (3,0) {$\longrightarrow$};
\node at (4,0) {$\mathrm{S_{01}E}$};
\node at (5.0,0.3) {$\scriptstyle{k_{\mathrm{cat}}}$};
\node at (5,0) {$\longrightarrow$};
\node at (6.2,0) {$\mathrm{S_{11}+ E}$};
\node at (4.5,-0.7) {$\scriptstyle{k_{\mathrm{off}}}$};
\node[rotate=-90] at (4,-0.7) {$\longrightarrow$};
\node at (4,-1.4) {$\mathrm{S_{01}+E}$};
\node at (5.9,-0.7) {$\scriptstyle{m}$};
\node[rotate=90] at (6.2,-0.7) {$\longrightarrow$};
\node at (6.2,-1.4) {$\mathrm{S_{10}+E}$};

\begin{scope}[xshift=9cm]
\node at (-0.2,0) {$\mathrm{S_{11}+F}$};
\node at (1.0,0.3) {$\scriptstyle{\ell_1}$};
\node at (1.0,0) {$\longrightarrow$};
\node at (2,0) {$\mathrm{S_{11}F}$};
\node at (3.0,0.3) {$\scriptstyle{\ell_3}$};
\node at (3,0) {$\longrightarrow$};
\node at (4,0) {$\mathrm{S_{10}F}$};
\node at (5.0,0.3) {$\scriptstyle{\ell_{\mathrm{cat}}}$};
\node at (5,0) {$\longrightarrow$};
\node at (6.2,0) {$\mathrm{S_{00}+ F}$};
\node at (4.5,-0.7) {$\scriptstyle{\ell_{\mathrm{off}}}$};
\node[rotate=-90] at (4,-0.7) {$\longrightarrow$};
\node at (4,-1.4) {$\mathrm{S_{10}+F}$};
\node at (5.9,-0.7) {$\scriptstyle{n}$};
\node[rotate=90] at (6.2,-0.7) {$\longrightarrow$};
\node at (6.2,-1.4) {$\mathrm{S_{01}+F}$};

\end{scope}

\end{tikzpicture}

\item The full ERK network: 

\begin{tikzpicture}[scale=0.82]

\node at (-0.2,0) {$\mathrm{S_{00}+E}$};
\node at (1.0,0.35) {$\scriptstyle{k_1}$};
\node at (1.0,-0.3) {$\bm{\scriptstyle{k_2}}$};
\node at (1.0,0.05) {$\longrightarrow$};
\node[rotate=180] at (1.0,-0.05) {$\bm{\longrightarrow}$};

\node at (2,0) {$\mathrm{S_{00}E}$};
\node at (3.0,0.3) {$\scriptstyle{k_3}$};
\node at (3,0) {$\longrightarrow$};
\node at (4,0) {$\mathrm{S_{01}E}$};
\node at (5.0,0.3) {$\scriptstyle{k_{\mathrm{cat}}}$};
\node at (5,0) {$\longrightarrow$};
\node at (6.2,0) {$\mathrm{S_{11}+ E}$};
\node at (4.5,-0.7) {$\scriptstyle{k_{\mathrm{off}}}$};
\node at (3.6,-0.7) {$\bm{\scriptstyle{k_{\mathrm{on}}}}$};
\node[rotate=90] at (3.95,-0.7) {$\bm{\longrightarrow}$};
\node[rotate=-90] at (4.05,-0.7) {$\longrightarrow$};
\node at (4,-1.4) {$\mathrm{S_{01}+E}$};
\node at (5.85,-0.7) {$\bm{\scriptstyle{m_3}}$};
\node[rotate=90] at (6.2,-0.7) {$\bm{\longrightarrow}$};
\node at (6.2,-1.4) {$\mathrm{S_{10}E}$};
\node at (5.85,-2.1) {$\scriptstyle{m}$};
\node at (6.6,-2.1) {$\bm{\scriptstyle{m_2}}$};
\node[rotate=90] at (6.15,-2.1) {$\longrightarrow$};
\node[rotate=-90] at (6.25,-2.1) {$\bm{\longrightarrow}$};
\node at (6.2,-2.8) {$\mathrm{S_{10}+E}$};

\begin{scope}[xshift=9cm]
\node at (-0.2,0) {$\mathrm{S_{11}+F}$};
\node at (1.0,0.35) {$\scriptstyle{\ell_1}$};
\node at (1.0,-0.3) {$\bm{\scriptstyle{\ell_2}}$};
\node at (1.05,0.05) {$\longrightarrow$};
\node[rotate=180] at (1.05,-0.05) {$\bm{\longrightarrow}$};

\node at (2,0) {$\mathrm{S_{11}F}$};
\node at (3.0,0.3) {$\scriptstyle{\ell_3}$};
\node at (3,0) {$\longrightarrow$};
\node at (4,0) {$\mathrm{S_{10}F}$};
\node at (5.0,0.3) {$\scriptstyle{\ell_{\mathrm{cat}}}$};
\node at (5,0) {$\longrightarrow$};
\node at (6.2,0) {$\mathrm{S_{00}+ F}$};
\node at (4.5,-0.7) {$\scriptstyle{\ell_{\mathrm{off}}}$};
\node at (3.6,-0.7) {$\bm{\scriptstyle{\ell_{\mathrm{on}}}}$};
\node[rotate=90] at (3.95,-0.7) {$\bm{\longrightarrow}$};
\node[rotate=-90] at (4.05,-0.7) {$\longrightarrow$};
\node at (4,-1.4) {$\mathrm{S_{10}+F}$};
\node at (5.85,-0.7) {$\bm{\scriptstyle{n_3}}$};
\node[rotate=90] at (6.2,-0.7) {$\bm{\longrightarrow}$};
\node at (6.2,-1.4) {$\mathrm{S_{01}F}$};
\node at (5.85,-2.1) {$\scriptstyle{n}$};
\node at (6.6,-2.1) {$\bm{\scriptstyle{n_2}}$};
\node[rotate=90] at (6.15,-2.1) {$\longrightarrow$};
\node[rotate=-90] at (6.25,-2.1) {$\bm{\longrightarrow}$};
\node at (6.2,-2.8) {$\mathrm{S_{01}+F}$};

\end{scope}

\end{tikzpicture}

\end{enumerate}

Rate constants have been named similarly to in \cite{ConradiERK2}, though a couple of minor changes have been made to facilitate the discussion to follow. The details of the chemical species and processes involved are in \cite{obatake,ConradiERK2}. The authors observe that both the full and reduced ERK networks are capable of oscillation and, in \cite{ConradiERK2}, demonstrate a range of parameter values at which Andronov--Hopf bifurcations occur in the reduced network.

Here, we merely observe that we can build the full network from the reduced network by adding the intermediates $\mathrm{S_{10}E}$ and $\mathrm{S_{01}F}$ into two reactions (corresponding to enlargement E6), and then adding some linearly dependent reactions (corresponding to enlargment E1). We can thus deduce, via Theorem~\ref{thminherit}, that if the reduced ERK network admits linearly stable oscillation, so must the full ERK network. 

Moreover, it follows from the proofs that in order to obtain a linearly stable periodic orbit in the full network we need only to control some of the rate constants (those shown in bold in the full network above). More precisely, suppose that a linearly stable periodic orbit occurs in the reduced ERK network at some values of the rate constants, say $k_1^*, k_3^*, k_{\mathrm{cat}}^*, k_{\mathrm{off}}^*, m^*, \ell_1^*, \ell_3^*, \ell_{\mathrm{cat}}^*, \ell_{\mathrm{off}}^*, n^*$; then we can set the rate constants $k_1, k_3, k_{\mathrm{cat}}, k_{\mathrm{off}}, m, \ell_1, \ell_3, \ell_{\mathrm{cat}}, \ell_{\mathrm{off}}$ and $n$ in the full ERK network to be equal to these values; and then choose the remaining rate constants, $k_2, k_{\mathrm{on}}, m_2, m_3, \ell_2, \ell_{\mathrm{on}}, \ell_2$ and $\ell_3$, in such a way as to guarantee the occurrence of a linearly stable periodic orbit. These observations answer a question posed in Remark~5.3 in \cite{ConradiERK2} in the affirmative. This does not, of course, imply that parameter regions obtained in this way are the {\em only} regions where we might find stable oscillation in the full network. Indeed, as demonstrated by the examples in \cite{banajiboroshofbauer}, enlargements of a CRN which preserve some dynamical behaviours can often introduce further nontrivial behaviours, including new bifurcations.

\subsection{A biologically important network: the MAPK cascade}
\label{secMAPK}

In \cite{banajipanteaMPNE}, a version of the Huang--Ferrell model of the mitogen-activated protein kinase (MAPK) cascade \cite{Huang.1996aa} with negative feedback was discussed in relation to its capacity for multiple equilibria. The reaction network can be represented as in Figure~\ref{figMAPK} (taken from \cite{banajipanteaMPNE}). 

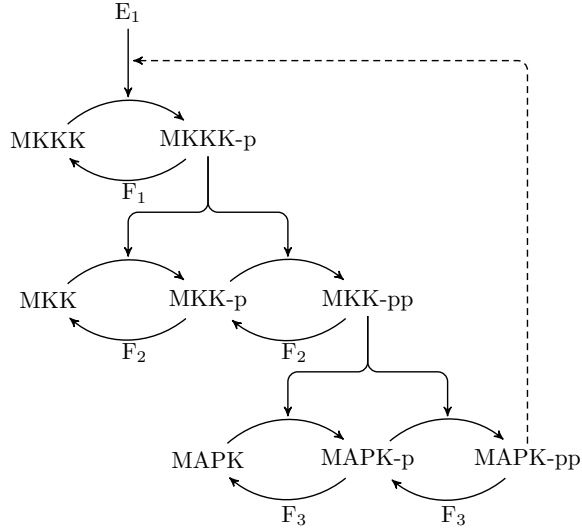
\begin{figure}[h]
\begin{center}
\resizebox{8cm}{!}{
\begin{tikzpicture}
[scale=2.4, 
place/.style={circle,draw=blue!50,fill=blue!20,thick,inner sep=0pt,minimum size=5.5mm},
enzyme/.style={circle,draw= blue!50,fill=white,thick,inner sep=0pt,minimum size=5.5mm},
pre/.style={<-,shorten <=1pt,>=stealth',semithick},
post/.style={->,shorten >=1pt,>=stealth',semithick}]


\node (MAPK) at (-1, 0) {MAPK};
\node (MAPK-pp) at (1,0) {MAPK-pp};
\node (MAPK-p) at (0,0) {MAPK-p}
edge [bend left = 40, post]  (MAPK)
edge [bend right = 40, pre] (MAPK-pp)
edge [bend right = 40, pre]  (MAPK)
edge [bend left = 40, post] (MAPK-pp);

\node (MKK) at (-2, 1) {MKK};
\node (MKK-pp) at (0,1) {MKK-pp};
\node (MKK-p) at (-1,1) {MKK-p}
edge [bend left = 40, post]  (MKK)
edge [bend right = 40, pre] (MKK-pp)
edge [bend right = 40, pre]  (MKK)
edge [bend left = 40, post] (MKK-pp);

\node (MKKK) at (-2, 2) {MKKK};
\node (MKKK-p) at (-1,2) {MKKK-p}
edge [bend left = 40, post]  (MKKK)
edge [bend right = 40, pre]  (MKKK);

\node (Ras/MKKKK) at (-1.5, 2.8) {E$_1$};

\node (F1) at (-1.46, 1.67) {$\text{F}_1$};
\node (F2) at (-1.46, .67) {$\text{F}_2$};
\node (F22) at (-.46, .67) {$\text{F}_2$};
\node (F3) at (-.46, -.35) {$\text{F}_{3}$};
\node (F32) at (.54, -.35) {$\text{F}_{3}$};

\draw [semithick, ->,>=stealth', rounded corners=1.5mm]  (0,.9) -- (0,0.55) -- (0.5,0.55) -- (0.5,0.27);
\draw [semithick, ->,>=stealth', rounded corners=1.5mm]  (0,.9) -- (0,0.55) -- (-0.5,0.55) -- (-0.5,0.27);

\draw [semithick, ->,>=stealth', rounded corners=1.5mm]  (-1,1.9) -- (-1,1.55) -- (-0.5,1.55) -- (-0.5,1.27);
\draw [semithick, ->,>=stealth', rounded corners=1.5mm]  (-1,1.9) -- (-1,1.55) -- (-1.5,1.55) -- (-1.5,1.27);

\draw [semithick, ->,>=stealth', rounded corners=1mm]  (-1.5,2.7) -- (-1.5,2.27);

\draw [densely dashed,semithick, ->,>=stealth', rounded corners=1.5mm] (1,0.1) -- (1,2.5) -- (-1.47,2.5) ;

\end{tikzpicture}
}
\caption{\label{figMAPK}The MAPK cascade with negative feedback \cite{Huang.1996aa} admits linearly stable oscillation with mass action kinetics. Such oscillation can easily be found in simulations for various choices of rate constants. The details of different chemical species involved, and biological motivations for the study of this network, are given in \cite{Kholodenko.2000aa}.}
\end{center}
\end{figure}
This rank-17 network, which we'll term $\mathcal{R}_{\mathrm{MAPK}}$, involves 36 irreversible reactions on 24 chemical species. (The number of species was incorrectly stated as 25 in \cite{banajipanteaMPNE}.) With the abbreviations X = MAPK, Y = MKK, Z = MKKK, it can be written as 9 subnetworks as follows:
\begin{eqnarray*}
&\mbox{(a)}\,\,\,&\text{E$_1$$\,\,+\,\,$Z}\,\,\rightleftharpoons\,\, \text{E$_1$--Z}\,\,\to\,\, \text{E$_1$$\,\,+\,\,$Z-p}, \\
&\mbox{(b)}\,\,\,&\text{F$_1$$\,\,+\,\,$Z-p}\,\,\rightleftharpoons\,\, \text{F$_1$--Z-p}\,\,\to\,\, \text{F$_1$$\,\,+\,\,$Z}\\
&\mbox{(c)}\,\,\,&\text{Z-p$\,\,+\,\,$Y}\,\,\rightleftharpoons\,\, \text{Z-p--Y}\,\,\to\,\, \text{Z-p$\,\,+\,\,$Y-p}\,\,\rightleftharpoons\,\, \text{Z-p--Y-p}\,\,\to\,\, \text{Z-p$\,\,+\,\,$Y-pp}\\  
&\mbox{(d)}\,\,\,& \text{F$_2$$\,\,+\,\,$Y-pp}\,\,\rightleftharpoons\,\, \text{F$_2$--Y-pp}\,\,\to\,\, \text{F$_2$$\,\,+\,\,$Y-p}\,\,\rightleftharpoons\,\, \text{F$_2$--Y-p}\,\,\to\,\, \text{F$_2$$\,\,+\,\,$Y}\\ 
&\mbox{(e)}\,\,\,&\text{Y-pp$\,\,+\,\,$X}\,\,\rightleftharpoons\,\, \text{Y-pp--X} \,\,\to\,\,\text{Y-pp$\,\,+\,\,$X-p}\,\,\rightleftharpoons\,\,\text{Y-pp--X-p}\,\,\to\,\,\text{Y-pp$\,\,+\,\,$X-pp}\\ 
&\mbox{(f)}\,\,\,&\text{F$_3$$\,\,+\,\,$X-pp}\,\,\rightleftharpoons\,\, \text{F$_3$--X-pp}\,\,\to\,\, \text{F$_3$$\,\,+\,\,$X-p}\,\,\rightleftharpoons\,\, \text{F$_3$--X-p}\,\,\to\,\,\text{F$_3$$\,\,+\,\,$X}\\
&\mbox{(g)}\,\,\,&\text{E}_1\,\,+\,\,\text{X-pp}\,\,\rightleftharpoons\,\, \text{E}_1\text{--X-pp}\\  
&\mbox{(h)}\,\,\,&\text{E}_1\text{--X-pp$\,\,+\,\,$Z}\,\,\rightleftharpoons\,\, \text{E}_1\text{--X-pp--Z}\\ 
&\mbox{(i)}\,\,\,&\text{E}_1\text{--X-pp--Z}\,\,\rightleftharpoons\,\, \text{E}_1\text{--Z$\,\,+\,\,$X-pp}.
\end{eqnarray*}
The final three reversible reactions describe the negative feedback process by which the terminal phosphorylated enzyme inhibits the initial phosphorylation step: without these three reactions we have the original network of Huang and Ferrell \cite{Huang.1996aa}. A version of $\mathcal{R}_{\mathrm{MAPK}}$ using Michaelis-Menten kinetics was shown to admit oscillations by Kholodenko in \cite{Kholodenko.2000aa}. Subsequently, oscillations were found by Qiao {\em et al} \cite{Qiao.2007aa} in numerical simulations of the original mass action system of Huang and Ferrell, demonstrating that the negative feedback loop was not necessary for autonomous oscillations to arise in the system. Later, it was shown by Hell and Rendall in \cite{hellrendall2016} that oscillations could be predicted in the Huang--Ferrell model as a consequence of an Andronov--Hopf bifurcation inherited from a smaller network. We remark that a version of the Huang--Ferrell model with {\em positive} feedback was also shown to admit oscillation by Gedeon and Sontag \cite{gedeonsontag2007} using different techniques.

The question we pose here is whether oscillation in $\mathcal{R}_{\mathrm{MAPK}}$ can be inferred, via the inheritance results in Theorem~\ref{thminherit}, from oscillation in any simpler network. We show by example that the answer is yes. Consider the following smaller network with 8 chemical species, 14 irreversible reactions, and rank 8, which we'll term $\mathcal{R}''_{\mathrm{MAPK}}$.
\begin{eqnarray*}
&\mbox{(a)}\,\,\,&\text{E$_1$}\,\,\to\,\, \text{E$_1$$\,\,+\,\,$Z-p},\\
&\mbox{(b)}\,\,\,&\text{Z-p}\,\,\to\,\, 0\\
&\mbox{(c)}\,\,\,&\text{Z-p}\,\,\to\,\, \text{Z-p$\,\,+\,\,$Y-p}\,\,\to\,\, \text{Z-p$\,\,+\,\,$Y-pp}\\  
&\mbox{(d)}\,\,\,& \text{F$_2$$\,\,+\,\,$Y-pp}\,\,\to\,\, \text{F$_2$$\,\,+\,\,$Y-p}\,\,\to\,\, 0\,\,\to\,\, \text{F$_2$}\\ 
&\mbox{(e)}\,\,\,&\text{Y-pp$\,\,+\,\,$X}\,\,\to\,\,\text{Y-pp$\,\,+\,\,$X-p}\,\,\to\,\,\text{Y-pp$\,\,+\,\,$X-pp}\\ 
&\mbox{(f)}\,\,\,&\text{X-pp}\,\,\to\,\, 0\,\,\to\,\, \text{X-p}\,\,\to\,\,\text{X}\\
&\mbox{(g)}\,\,\,&\text{E}_1\,\,+\,\,\text{X-pp}\,\,\rightleftharpoons\,\, 0  
%
%
\end{eqnarray*}
Observe that the negative feedback process has been considerably simplified, but not entirely removed, in $\mathcal{R}''_{\mathrm{MAPK}}$. Although we have not attempted to prove the occurrence of bifurcations leading to oscillation, in numerical simulations we readily find periodic behaviour in the simpler network with mass action kinetics. We now show that if $\mathcal{R}''_{\mathrm{MAPK}}$ indeed admits linearly stable oscillation with mass action kinetics, then $\mathcal{R}_{\mathrm{MAPK}}$ must inherit this behaviour according to Theorem~\ref{thminherit}. 

First, by repeated application of enlargement E3, we add to $\mathcal{R}''_{\mathrm{MAPK}}$ six linearly dependent species, $\mathrm{Z}$, $\mathrm{Y}$, $\mathrm{F}_3$, $\text{F$_2$--Y-p}$, $\text{F$_3$--X-pp}$, and $\text{E}_1\text{--X-pp}$, and arrive at the following network with 14 chemical species, 14 irreversible reactions, and rank 8, which we'll term $\mathcal{R}'_{\mathrm{MAPK}}$:
\begin{eqnarray*}
&\mbox{(a)}\,\,\,&\text{E$_1$$\,\,+\,\,$Z}\,\,\to\,\, \text{E$_1$$\,\,+\,\,$Z-p},\\
&\mbox{(b)}\,\,\,&\text{Z-p}\,\,\to\,\, \text{Z}\\
&\mbox{(c)}\,\,\,&\text{Z-p$\,\,+\,\,$Y}\,\,\to\,\, \text{Z-p$\,\,+\,\,$Y-p}\,\,\to\,\, \text{Z-p$\,\,+\,\,$Y-pp}\\  
&\mbox{(d)}\,\,\,& \text{F$_2$$\,\,+\,\,$Y-pp}\,\,\to\,\, \text{F$_2$$\,\,+\,\,$Y-p}\,\,\to\,\, \text{F$_2$--Y-p}\,\,\to\,\, \text{F$_2$$\,\,+\,\,$Y}\\ 
&\mbox{(e)}\,\,\,&\text{Y-pp$\,\,+\,\,$X}\,\,\to\,\,\text{Y-pp$\,\,+\,\,$X-p}\,\,\to\,\,\text{Y-pp$\,\,+\,\,$X-pp}\\ 
&\mbox{(f)}\,\,\,&\text{F$_3$$\,\,+\,\,$X-pp}\,\,\to\,\, \text{F$_3$--X-pp}\,\,\to\,\, \text{F$_3$$\,\,+\,\,$X-p}\,\,\to\,\,\text{F$_3$$\,\,+\,\,$X}\\
&\mbox{(g)}\,\,\,&\text{E}_1\,\,+\,\,\text{X-pp}\,\,\rightleftharpoons\,\, \text{E}_1\text{--X-pp} 
%
%
\end{eqnarray*}
Next, each of the subnetworks (a) to (g) of $\mathcal{R}_{\mathrm{MAPK}}$ is obtained from the corresponding subnetwork of $\mathcal{R}'_{\mathrm{MAPK}}$ via some sequence of enlargements of the form E1--E6, or using Corollary~\ref{corenz}:
\begin{enumerate}
\item[(a)] We add a new intermediate (E6) and the reverse of an irreversible reaction (E1).
\item[(b)] We introduce an enzymatic mechanism (Corollary~\ref{corenz}).
\item[(c)] We add two new intermediates (E6) and the reverse of two irreversible reactions (E1).
\item[(d)] We add one new intermediate (E6) and the reverse of two irreversible reactions (E1).
\item[(e)] We add two new intermediates (E6) and the reverse of two irreversible reactions (E1).
\item[(f)] We add one new intermediate (E6) and the reverse of two irreversible reactions (E1).
\item[(g)] This reaction is left unchanged.
\end{enumerate}
Finally, the remaining subnetworks, (h) and (i), of $\mathcal{R}_{\mathrm{MAPK}}$ are added in.
\begin{enumerate}
\item[(h)] We add this reversible reaction involving a new complex to the network (E5).
\item[(i)] We add this linearly dependent reaction on existing species (E1).
\end{enumerate}

Note that enlargement E6, the subject of Theorem~\ref{mainthm} here, figures heavily in this example. Indeed, from a practical point of view, adding or removing intermediates appears to be one of the most common and natural operations carried out on reaction networks. 

We do not claim that $\mathcal{R}''_{\mathrm{MAPK}}$ is minimal amongst oscillatory ``subnetworks'' of $\mathcal{R}_{\mathrm{MAPK}}$ w.r.t. to the partial order on CRNs generated by the transformations in Theorem~\ref{thminherit}. This example does, however, demonstrate how stable oscillation in a fairly complex reaction network can be predicted from stable oscillation in a much smaller network, highlighting the utility of Theorem~\ref{thminherit}. An open question is whether oscillation in the original model of Huang and Ferrell \cite{Huang.1996aa}, proved to occur by Hell and Rendall in \cite{hellrendall2016}, can be predicted from oscillation in a smaller network using Theorem~\ref{thminherit}.

\section{Conclusions and extensions}

We have shown in Theorem~\ref{mainthm} that adding intermediate complexes to the reactions of a CRN preserves its capacity for nondegenerate or linearly stable multistationarity and oscillation, provided the new species enter nontrivially into the enlarged network. This completes the task of extending inheritance results in \cite{banajipanteaMPNE} to oscillation. For convenience, we collected together a number of inheritance results in Theorem~\ref{thminherit}. The power of these results, especially when used together, was illustrated in the examples.

The proofs of Theorem~\ref{mainthm} and of several claims summarised in Theorem~\ref{thminherit} rely on GSPT as developed by Fenichel \cite{Fenichel79}. The generality and power of perturbation theory approaches implies various extensions to the inheritance results here. Some of these extensions were remarked on in \cite{banajiboroshofbauer}. For instance, we can go beyond equilibria and periodic orbits and consider the inheritance of other compact, normally hyperbolic invariant manifolds. 

In a related direction, the techniques of perturbation theory permit us to make claims on the inheritance of local bifurcations when we enlarge CRNs. Where the enlargements give rise to regular perturbation problems, the inheritance of bifurcations in enlarged networks can be almost immediate (see remarks and examples in \cite{banajiCRNosci,banajiboroshofbauer}). However where GSPT is required the questions are more subtle, as can be seen in nontrivial applications involving Andronov--Hopf bifurcation and Bogdanov--Takens bifurcation in \cite{hellrendall2016,kreusserrendall2021}. While we have not explicitly treated the inheritance of bifurcations, many of the techniques in this work can be adapted to prove that local bifurcations survive various network enlargements. This would allow us to fully answer questions about the inheritance of bifurcations posed by Conradi and Shiu in \cite{CONRADI2018507}. Making these claims precise remains a task for the future.

Theorem~\ref{thminherit} also suggests extensions to the algorithmic work in \cite{banajiCRNosci}. Such work would involve identifying CRNs which admit nontrivial behaviours such as stable oscillation or multistationarity, and which are minimal with respect to the enlargements E1--E6. The identification of these minimal networks with interesting behaviours should be coupled with algorithms to decide whether a given CRN includes another as a subnetwork, in the sense that the larger CRN can be obtained from the smaller one via a chain of enlargements of the form E1--E6. 

For an example of work in this direction, see \cite{banajiborosHopf}, where bimolecular, mass action networks of minimal size admitting Andronov--Hopf bifurcation are fully enumerated. Using some of the inheritance results gathered in this paper, inferences can immediately be drawn on the occurrence of oscillation in larger networks.

\appendix

\section{Proof of Theorem~\ref{mainthm}}
\label{appproof}

Note that several calculations in the proof are omitted if they are similar to calculations in \cite{banajiCRNosci1}. Notation follows that in \cite{banajipanteaMPNE} and \cite{banajiCRNosci1}. In brief:
\begin{enumerate}
\item $\mathbf{1}$ denotes a vector of ones whose length is inferred from the context. If $\varepsilon$ is any real constant, then $\bm{\varepsilon}$ denotes a vector whose entries are all $\varepsilon$ and whose length is inferred from the context.\item $x^a$ is an abbreviation for the (generalised) monomial $\prod_ix_i^{a_i}$, and $x^A$ means the vector of (generalised) monomials $(x^{A_1}, x^{A_2}, \ldots, x^{A_m})^{\mathrm{t}}$ where $A_i$ is the $i$th row of a matrix $A$. Dimensions are inferred from the context, and all such expressions must, of course, make dimensional sense. 
\item $A \circ B$ is the entrywise product of matrices (or vectors) $A$ and $B$, assumed to have the same dimensions. $A/B$ is the matrix with $ij$th entry $A_{ij}/B_{ij}$, which is defined provided no entry of $B$ is zero.  
\item $d_{\mathrm{H}}(X, Y)$ denotes the Hausdorff distance between two nonempty sets $X$ and $Y$ in $\mathbb{R}^n$ with the Euclidean metric.
\end{enumerate}

\begin{myproof}{Theorem~\ref{mainthm}} Let $\mathcal{R}$ have dynamics governed by the system of ODEs
\begin{equation}
\label{eqbasic0}
\dot x = \Gamma v(x)\,.
\end{equation}
Following Remark~\ref{remkin}, we now observe that the assumption that $\mathcal{R}$ has mass action kinetics plays no part in the proof, beyond ensuring that the rate function $v$ is well defined and $C^2$ on the positive orthant, and is a positive function (i.e., $v\colon \mathbb{R}^n_{+} \to \mathbb{R}^{r}_{+}$). Henceforth this is all we will assume about $v$. However, assuming mass action kinetics for the reactions whose reactant complexes include the new species $\mathrm{Y}$ is important at some points in the proof. 

We assume, without loss of generality, that each reaction of $\mathcal{R}$ is irreversible and, if necessary, trivial reactions have been added, and individual reactions have been written as multiple reactions, as described in Remark~\ref{remcons}.

We also assume, in accordance with the main premise of the theorem, that $v$ is such that (\ref{eqbasic0}) has $r_1$ nondegenerate equilibria and $r_2$ positive nondegenerate periodic orbits, denoted by $\mathcal{O}_{1}, \ldots, \mathcal{O}_{r_1+r_2}$, on some positive stoichiometric class. Let $S$ be the coset of $\mathrm{im}\,\Gamma$ which includes all the $\mathcal{O}_i$. Let $\mathcal{Z}_{\mathcal{O}_i} \subseteq S$ be a relatively open neighbourhood of $\mathcal{O}_i$ in $S$, with the sets $\mathcal{Z}_{\mathcal{O}_i}$ chosen such that their closures are compact, connected, positive, and disjoint. We now choose one limit set, say $\mathcal{O}_j$, and, to reduce notational complexity, denote it by $\mathcal{O}$ and its chosen neighbourhood in $S$ by $\mathcal{Z}_\mathcal{O}$.

Let the $i$th reaction to be split be reaction $s_i$, and define $\underline{v}(x) := (v_{s_1}(x), \ldots, v_{s_m}(x))^\mathrm{t}$ to be the vector of reaction rates associated with the reactions to be split. Define $\alpha_i := c_i - b_i$, $(i = 1, \ldots, m)$ and $\alpha := (\alpha_1|\alpha_2|\cdots |\alpha_m)$. Since, by assumption, $\beta : = (\beta_1|\beta_2|\cdots|\beta_m)$ has rank $m$, we may assume, by reordering the added species $\mathrm{Y}$ if necessary, that the top $m \times m$ block of $\beta$, which we term $\hatt{\beta}$, is nonsingular. We'll refer to the bottom $k \times m$ block of $\beta$ as $\doublehat{\beta}$. In the case that $k=0$, $\doublehat{\beta}$ is an empty matrix. This special case is easier than the general one and requires some remarks and conventions on empty matrices, as described in \cite{banajiCRNosci1}. From here on, we assume that $k>0$, and leave the minor modifications needed to handle the case $k=0$ to the reader.

Assuming mass action kinetics for the reactions $c_i \cdot \mathrm{X} + \beta_i\cdot \mathrm{Y} \rightarrow b_i \cdot \mathrm{X}$, set their rates to be $\ell_ix^{c_i}y^{b_i}$ ($i = 1, \ldots, m$), so that their collective rate is $\ell \circ x^{c^\mathrm{t}} \circ y^{\beta^\mathrm{t}}$ where $\ell := (\ell_1, \ldots, \ell_m)^\mathrm{t}$ and $c:=(c_1|c_2|\cdots|c_m)$. We allow the vector of rate constants $\ell$ to depend on a positive parameter $\varepsilon$, and set $\ell=\bm{\varepsilon}^{-\hatt{\beta}^\mathrm{t}}$. Define
\[
q(x,y,\varepsilon) := \underline{v}(x) - \bm{\varepsilon}^{-\hatt{\beta}^\mathrm{t}} \circ x^{c^\mathrm{t}} \circ y^{\beta^\mathrm{t}}\,.
\]
The assumptions so far give us that the dynamics of $\mathcal{R}'$ is governed by
\begin{equation}
\label{perteq}
\begin{array}{rcl}
\dot x &=& \Gamma v(x) + \alpha q(x,y,\varepsilon)\\
\dot y &=& \beta q(x,y,\varepsilon)\,.
\end{array}
\quad = \quad \Gamma'\left(\begin{array}{c}v(x)\\q(x,y,\varepsilon)\end{array}\right)\,,
\end{equation}
where
\[
\Gamma':=\left(\begin{array}{cc}\Gamma&\alpha\\0&\beta\end{array}\right)\,.
\]
Note that $\Gamma'$ is not actually the stoichiometric matrix of $\mathcal{R}'$ (this was erroneously stated in the proof of Theorem~6 in \cite{banajipanteaMPNE}); but $\Gamma'$ is easily seen to be obtained from the stoichiometric matrix of $\mathcal{R}'$ via elementary column operations, and hence the image of $\Gamma'$ is the stoichiometric subspace of $\mathcal{R}'$, which is all that is needed. Our goal is to show that, for each sufficiently small $\varepsilon>0$, (\ref{perteq}) admits a positive equilibrium or periodic orbit (later denoted by $\mathcal{O}'_{\varepsilon}$) which is nondegenerate relative to the coset of $\mathrm{im}\,\Gamma'$ (later denoted by $S'$) in which it lies.  

Define $\delta := -(\doublehat{\beta}\hat{\beta}^{-1})^{\mathrm{t}}$ and note that $\delta^{\mathrm{t}}\hatt{y}+\doublehat{y}$ is constant along trajectories of (\ref{perteq}). We choose this constant to be $\mathbf{1}$ (any other vector in $\mathbb{R}^k_{+}$ will do) and introduce the new variable $z = x - \alpha \hatt{\beta}\hatt{y}$. More formally, define the hyperplane
\[
\mathcal{H}:=\{(x,y) \in \mathbb{R}^n \times \mathbb{R}^{m+k}\,\colon\, \delta^{\mathrm{t}}\hatt{y}+\doublehat{y} = \mathbf{1}\}
\]
and define the affine bijection $\phi\colon \mathcal{H} \to \mathbb{R}^n \times \mathbb{R}^m$ by
\[
\phi(x, (\hatt{y}, \mathbf{1}-\delta^{\mathrm{t}}\hatt{y})) = (x-\alpha\hat{\beta}^{-1} \hatt{y}, \hatt{y})\,.
\]
We refer to the codomain of $\phi$ as $(z, \hatt{y})$-space. Regarding $(z, \hatt{y})$ as coordinates on $\mathcal{H}$, (\ref{perteq}) reduces to:
\begin{equation}
\label{perteq1}
\begin{array}{rcl}
\dot z &=& \Gamma v(z+\alpha\hat{\beta}^{-1} \hatt{y})\\
\dot {\hatt{y}} &=& \hatt{\beta}q(z+\alpha\hat{\beta}^{-1} \hatt{y},(\hatt{y},\mathbf{1}-\delta^{\mathrm{t}}\hatt{y}),\varepsilon)\,.
\end{array}
\end{equation}
We are interested in (\ref{perteq1}) on a domain where $z+\alpha\hat{\beta}^{-1} \hatt{y}$, $\hatt{y}$ and $\mathbf{1}-\delta^{\mathrm{t}}\hatt{y}$ are all positive. We will later ensure these conditions are met.

Let $x_0$ be any point on $S$, and let $S'$ be the coset of $\mathrm{im}\,\Gamma'$ which includes the point $(x,\hatt{y},\doublehat{y})=(x_0, 0, \mathbf{1})$. Note that $S' \subseteq \mathcal{H}$, and $\left. \phi\right|_{S'}$ is an affine bijection between $S'$ and $S \times \mathbb{R}^m$ (the calculations are in \cite{banajiCRNosci1}). We may thus identify $S'$ with $S \times \mathbb{R}^m$. 

As $q(x,(\hatt{y}, \doublehat{y}),\varepsilon) = \underline{v}(x) - \bm{\varepsilon}^{-\hatt{\beta}^\mathrm{t}}\circ \hatt{y}^{\hatt{\beta}^\mathrm{t}}\circ\doublehat{y}^{\doublehat{\beta}^\mathrm{t}}\circ x^{c^\mathrm{t}}$, we may rewrite (\ref{perteq1}) more explicitly as
\begin{equation}
\label{perteq2}
\begin{array}{rcl}
\dot z &=& \Gamma v(z+\alpha\hat{\beta}^{-1} \hatt{y})\\
\dot {\hatt{y}} &=& \hatt{\beta}\left(\underline{v}(z+\alpha\hat{\beta}^{-1} \hatt{y}) - \bm{\varepsilon}^{-\hatt{\beta}^\mathrm{t}}\circ\hatt{y}^{\hatt{\beta}^\mathrm{t}}\circ(\mathbf{1}-\delta^{\mathrm{t}}\hatt{y})^{\doublehat{\beta}^\mathrm{t}}\circ (z+\alpha\hat{\beta}^{-1} \hatt{y})^{c^\mathrm{t}}\right)\,.
\end{array}
\end{equation}
Next, we define a new variable $w=\hatt{y}/\varepsilon$. Observe that this rescaled variable did not figure in the main proof in \cite{banajiCRNosci1}, but is needed here in order to get a singularly perturbed system amenable to the analysis in \cite{Fenichel79}. Formally, define $\psi_\varepsilon\colon \mathbb{R}^n \times \mathbb{R}^m \to \mathbb{R}^n \times \mathbb{R}^m$ by $\psi_\varepsilon(z,\hatt{y})=(z,\hatt{y}/\varepsilon)$ and note that, for each fixed $\varepsilon>0$, $\left. \psi_\varepsilon\right|_{S \times \mathbb{R}^m}$ is a smooth bijection on $S \times \mathbb{R}^m$. 

In terms of $z, w$, and $\varepsilon$, (\ref{perteq2}) becomes the following singularly perturbed system
\[
(A_\varepsilon)\quad \begin{array}{rcl}
\dot z &=& \Gamma v(z+\varepsilon\alpha\hat{\beta}^{-1} w)\\
\varepsilon\dot w &=& \hatt{\beta}\left(\underline{v}(z+\varepsilon\alpha\hat{\beta}^{-1} w) - w^{\hatt{\beta}^\mathrm{t}}\circ(\mathbf{1}-\varepsilon\delta^{\mathrm{t}}w)^{\doublehat{\beta}^\mathrm{t}}\circ (z+\varepsilon\alpha\hat{\beta}^{-1} w)^{c^\mathrm{t}}\right)\,.
\end{array}
\] 
For each fixed value of $\varepsilon>0$, we are interested in $(A_{\varepsilon})$ on a domain where $z+\varepsilon\alpha\hat{\beta}^{-1} w$, $w$ and $\mathbf{1}-\varepsilon \delta^{\mathrm{t}} w$ are all positive. We will ensure these conditions are met.

Rescaling time in the usual way gives the ``fast time'' version of $(A_{\varepsilon})$:
\[
(B_\varepsilon)\quad \begin{array}{rcl}
\dot z &=& \varepsilon\Gamma v(z+\varepsilon\alpha\hat{\beta}^{-1} w)\\
\dot w &=& \hatt{\beta}\left(\underline{v}(z+\varepsilon\alpha\hat{\beta}^{-1} w) - w^{\hatt{\beta}^\mathrm{t}}\circ(\mathbf{1}-\varepsilon\delta^{\mathrm{t}}w)^{\doublehat{\beta}^\mathrm{t}}\circ (z+\varepsilon\alpha\hat{\beta}^{-1} w)^{c^\mathrm{t}}\right)\,.
\end{array}
\] 
In the limit $\varepsilon \to 0+$, $(A_\varepsilon)$ and $(B_\varepsilon)$ become, respectively,
\[
(A_0)\quad \begin{array}{rcl}
\dot z &=& \Gamma v(z)\\
0 &=& \hatt{\beta}(\underline{v}(z) - w^{\hatt{\beta}^\mathrm{t}}\circ z^{c^\mathrm{t}})
\end{array}\quad \mbox{and} \quad (B_0)\quad \begin{array}{rcl}
\dot z &=& 0\\
\dot w &=& \hatt{\beta}\left(\underline{v}(z) - w^{\hatt{\beta}^\mathrm{t}}\circ z^{c^\mathrm{t}}\right)\,.
\end{array}
\] 
We see that ($B_0$) has a manifold of (positive) equilibria 
\[
\{(z,w) \in \mathbb{R}^n_{+} \times \mathbb{R}^m_{+}\,\colon\,\underline{v}(z) = w^{\hatt{\beta}^\mathrm{t}}\circ z^{c^\mathrm{t}}\} = \{(z,w) \in \mathbb{R}^n_{+} \times \mathbb{R}^m_{+}\,\colon\,w=V(z)\circ z^\gamma\}\,,
\]
where $V(z) = (\underline{v}(z))^{(\hatt{\beta}^{-1})^{\mathrm{t}}}$ and $\gamma = -(c\hatt{\beta}^{-1})^{\mathrm{t}}$. Note that whenever $z$ is positive, the same holds for $\underline{v}(z)$, $z^\gamma$ and $V(z)$. Certainly, $V(z)\circ z^\gamma \in \mathbb{R}^m_{+}$ for all $z \in \overline{\mathcal{Z}_\mathcal{O}}$ (the closure of $\mathcal{Z}_\mathcal{O}$). We define
\[
\mathcal{E}_\mathcal{O}:= \{(z,w) \in \mathcal{Z}_\mathcal{O} \times \mathbb{R}^m\,\colon\, w=V(z)\circ z^\gamma\}\,,
\]
and note that $\mathcal{E}_\mathcal{O}$ is a subset of the positive equilibria of $(B_0)$, and is the image of $\mathcal{Z}_\mathcal{O}$ under the map $h \colon \mathbb{R}^n_{+} \to \mathbb{R}^n_{+} \times \mathbb{R}^m_{+}$, $z\mapsto (z,V(z)\circ z^\gamma)$. Indeed, if we restrict its domain to $\mathcal{Z}_\mathcal{O}$ and codomain to $\mathcal{E}_\mathcal{O}$, $h$ is a $C^2$ diffeomorphism. 

We make the following observations:
\begin{enumerate}
\item[(i)] The eigenvalues of the equilibria of $(B_0)$ comprising $\mathcal{E}_\mathcal{O}$ corresponding to directions in $S \times \mathbb{R}^m$ {\em not} tangential to $\mathcal{E}_\mathcal{O}$, are real and negative. To see this we compute the derivative of $F(z,w):=\hatt{\beta}(\underline{v}(z) - w^{\hatt{\beta}^\mathrm{t}}\circ z^{c^\mathrm{t}})$ w.r.t. $w$ and obtain
\[
D_{w}F(z,w) = -\hatt{\beta}\mathrm{diag}(w^{\hatt{\beta}^\mathrm{t}}\circ z^{c^\mathrm{t}})\hatt{\beta}^\mathrm{t}\mathrm{diag}(\mathbf{1}/w)\,,
\]
giving
\[
\left.D_{w}F(z,w)\right|_{(z,w) \in \mathcal{E}_\mathcal{O}} = -\hatt{\beta}\mathrm{diag}(\underline{v}(z))\hatt{\beta}^\mathrm{t}\mathrm{diag}(\mathbf{1}/(V(z)\circ z^\gamma))\,.
\]
The reader may confirm that any product of square matrices of the form $AD_1A^\mathrm{t}D_2$ where $A$ is nonsingular and $D_1, D_2$ are positive diagonal matrices is similar to a positive definite matrix, and hence $-\hatt{\beta}\mathrm{diag}(\underline{v}(z))\hatt{\beta}^\mathrm{t}\mathrm{diag}(\mathbf{1}/(V(z)\circ z^\gamma))$ has real, negative, eigenvalues for any positive $z$. Further details are in \cite{banajiCRNosci1}.

\item[(ii)] The differential-algebraic system $(A_0)$ defines a local flow on $\mathcal{E}_\mathcal{O}$ which is simply the lifting of the local flow of $\dot z = \Gamma z$ on $\mathcal{Z}_\mathcal{O}$ to $\mathcal{E}_\mathcal{O}$ via $h$. The limit set $\mathcal{O}$ of $\dot z = \Gamma v(z)$ is mapped to the limit set $\mathcal{O}^{*}:=h(\mathcal{O})$ on $\mathcal{E}_\mathcal{O}$ for $(A_0)$.
As $\mathcal{O}$ is nondegenerate relative to $S$, and $h|_{\mathcal{Z}_\mathcal{O}}$ is a $C^2$ diffeomorphism between $\mathcal{Z}_\mathcal{O}$ and $\mathcal{E}_\mathcal{O}$, $\mathcal{O}^{*}$ is nondegenerate relative to $\mathcal{E}_\mathcal{O}$. If $\mathcal{O}$ is hyperbolic (resp., linearly stable) relative to $S$, then the same holds for $\mathcal{O}^{*}$ relative to $\mathcal{E}_\mathcal{O}$. In other words, $\mathcal{O}^{*}$ has these properties as a limit set of the ``reduced vector field'' on $\mathcal{E}_\mathcal{O}$ as discussed in \cite{Fenichel79}.
\end{enumerate}

Define $K_\mathcal{O}=2\sup_{z \in \overline{\mathcal{Z}_\mathcal{O}}}\{|V(z)\circ z^\gamma|\}$. Clearly $\mathcal{E}_\mathcal{O}$, and hence $\mathcal{O}^{*}$, lie in the (open) set 
\[
\mathcal{Z}_\mathcal{O}^+ := \{(z,w) \in \mathcal{Z}_\mathcal{O} \times \mathbb{R}^m \,\colon\, w \in \mathbb{R}^m_{+},\,\, |w| < K_\mathcal{O}\}\,.
\]
Our computations (i) and (ii) have got us to the point where we can apply the relevant results of GSPT. Suppose that $\mathcal{O}$ is a periodic orbit (resp., equilibrium). Then, by Theorem~13.1 in \cite{Fenichel79} (resp., Theorem~12.1 in \cite{Fenichel79}), given any $\zeta>0$, there exists $\varepsilon_0>0$ such that, for each $\varepsilon \in [0,\varepsilon_0)$, ($A_\varepsilon$) has a periodic orbit (resp., equilibrium) $\mathcal{O}^{*}_{\varepsilon}$ on $S \times \mathbb{R}^m$ satisfying $d_H(\mathcal{O}^{*}, \mathcal{O}^{*}_{\varepsilon})<\zeta$. Since $\mathcal{O}^{*} \subseteq \mathcal{Z}_\mathcal{O}^+$, we can choose $\varepsilon_0 >0$ to ensure that $\varepsilon \in [0,\varepsilon_0)$ implies that $\mathcal{O}^{*}_{\varepsilon} \subseteq \mathcal{Z}_\mathcal{O}^+$. Moreover, by Theorem~13.2 in \cite{Fenichel79} (resp., Theorem~12.2 in \cite{Fenichel79}), $\varepsilon_0$ can be chosen to ensure that $\mathcal{O}^{*}_{\varepsilon}$ is nondegenerate relative to $S \times \mathbb{R}^m$ and if $\mathcal{O}$ was hyperbolic (resp., linearly stable) relative to $S$, then $\mathcal{O}^{*}_{\varepsilon}$ is hyperbolic (resp., linearly stable) relative to $S \times \mathbb{R}^m$. 

All that remains is to ensure that $\mathcal{O}^{*}_{\varepsilon}$ is indeed positive in the original coordinates $x$ and $y$. For this we need to impose further conditions on $\varepsilon$. Let $D_\mathcal{O}$ be the minimum distance from $\overline{\mathcal{Z}_\mathcal{O}}$ to the boundary of the nonnegative orthant, namely, $D_\mathcal{O}:= \min_{x \in \overline{\mathcal{Z}_\mathcal{O}}, y \in \partial \mathbb{R}^n_{\geq 0}} d(x,y)$, where $d(\cdot,\cdot)$ is the Euclidean metric, and let $\|\cdot\|$ refer to the matrix norm induced by the Euclidean norm. Define
\[
\varepsilon_1 := \min\left\{\frac{D_\mathcal{O}}{2K_\mathcal{O}\|\alpha\hatt{\beta}^{-1}\|}, \,\, \frac{1}{2K_\mathcal{O}\|\delta^{\mathrm{t}}\|},\,\,\varepsilon_0\right\}\,.
\]
Provided $(z,w) \in \mathcal{Z}_\mathcal{O}^+$ and $\varepsilon\leq \varepsilon_1$, we have (i) $|\varepsilon \alpha\hatt{\beta}^{-1}w| \leq D_\mathcal{O}/2$ and hence $z+\varepsilon \alpha\hatt{\beta}^{-1}w \in \mathbb{R}^n_{+}$, and (ii) $|\varepsilon \delta^\mathrm{t}w| \leq \frac{1}{2}$, and hence $\mathbf{1}-\varepsilon \delta^\mathrm{t}w \in \mathbb{R}^k_{+}$.

Now fix any $\varepsilon \in (0, \varepsilon_1)$. Corresponding to $\mathcal{O}^{*}_{\varepsilon}$ for $(A_{\varepsilon})$, (\ref{perteq2}) has the limit set 
\[
\mathcal{O}^{**}_{\varepsilon} := \psi_\varepsilon^{-1}(\mathcal{O}^{*}_{\varepsilon}) = \{(z,\varepsilon w)\,\colon\,(z,w) \in \mathcal{O}^{*}_{\varepsilon}\}\,.
\]
Recall that $\mathcal{O}^{*}_{\varepsilon} \subseteq \mathcal{Z}_\mathcal{O}^+$, and consequently $\mathcal{O}^{**}_{\varepsilon} \subseteq \mathcal{Z}_\mathcal{O} \times \mathbb{R}^m_{+}$. Since $\mathcal{O}^{*}_{\varepsilon}$ is nondegenerate relative to $S \times \mathbb{R}^m$ and, for each fixed $\varepsilon>0$, $\left. \psi_\varepsilon\right|_{S \times \mathbb{R}^m}$ is a smooth diffeomorphism of $S \times \mathbb{R}^m$ to itself, it holds that $\mathcal{O}^{**}_{\varepsilon}$ is nondegenerate relative to $S \times \mathbb{R}^m$. If $\mathcal{O}$ was hyperbolic (resp., linearly stable) relative to $S$, then the same holds for $\mathcal{O}^{*}_{\varepsilon}$ relative to $S \times \mathbb{R}^m$, and consequently the same holds for $\mathcal{O}^{**}_{\varepsilon}$ relative to $S \times \mathbb{R}^m$.

In turn, (\ref{perteq}) has the limit set
\[
\mathcal{O}'_{\varepsilon} := \phi^{-1}(\mathcal{O}^{**}_{\varepsilon}) =\{(x,(\hatt{y}, \doublehat{y}))\,\colon\,(z,w) \in \mathcal{O}^{*}_{\varepsilon},\,\, \hatt{y} = \varepsilon w,\,\, x=z+\alpha \hatt{\beta}\hatt{y},\,\, \doublehat{y} = \mathbf{1}-\delta^{\mathrm{t}}\hatt{y}\}\,.
\]
As $\mathcal{O}^{**}_{\varepsilon} \subseteq \mathcal{Z}_\mathcal{O} \times \mathbb{R}^m_{+}$, we have $\mathcal{O}'_{\varepsilon} \subseteq \phi^{-1}(\mathcal{Z}_\mathcal{O} \times \mathbb{R}^m_{+})$. Moreover, by construction, whenever $\varepsilon \in (0,\varepsilon_1)$ and $(z,w) \in \mathcal{O}^{*}_{\varepsilon} \subseteq \mathcal{Z}_\mathcal{O}^+$, we have $z+\varepsilon \alpha\hatt{\beta}^{-1}w \in \mathbb{R}^n_{+}$ and $\mathbf{1}-\varepsilon \delta^\mathrm{t}w \in \mathbb{R}^k_{+}$. Thus $\mathcal{O}'_{\varepsilon}$ is a {\em positive} limit set of (\ref{perteq}). Since $\mathcal{O}^{**}_{\varepsilon}$ is nondegenerate relative to $S \times \mathbb{R}^m$ and $\phi^{-1}$ is a smooth diffeomorphism taking $S \times \mathbb{R}^m$ to $S'$, $\mathcal{O}'_{\varepsilon}$ is nondegenerate relative to $S'$. If $\mathcal{O}$ was hyperbolic (resp., linearly stable) relative to $S$, then $\mathcal{O}^{**}_{\varepsilon}$ is hyperbolic (resp., linearly stable) relative to $S \times \mathbb{R}^m$, and consequently $\mathcal{O}'_{\varepsilon}$ is hyperbolic (resp., linearly stable) relative to $S'$.

In the arguments above, we had fixed a limit set $\mathcal{O}$. We can now repeat the arguments for each limit set $\mathcal{O}_i$ ($i=1, \ldots, r_1+r_2$) on $S$. $\varepsilon_1$ as constructed above depends on $\mathcal{O}$, but we can define $\varepsilon_1^*$ to be the minimum of these values of $\varepsilon_1$; for each fixed $\varepsilon \in (0,\varepsilon_1^*)$, (\ref{perteq}) then has $r_1$ positive nondegenerate equilibria and $r_2$ positive nondegenerate periodic orbits on $S'$. Let us fix $\varepsilon \in (0,\varepsilon_1^*)$ and denote these limit sets on $S'$ by $\mathcal{O}'_{1}, \ldots, \mathcal{O}'_{r_1+r_2}$. By construction, if $\mathcal{O}_i$ is hyperbolic (resp., linearly stable) relative to $S$, then the same holds for $\mathcal{O}'_i$ relative to $S'$. Note, finally, that the construction has ensured that the $\mathcal{O}'_i$ are all distinct: the $\mathcal{O}_i$ lie in disjoint sets $\mathcal{Z}_{\mathcal{O}_i}$, and hence $\mathcal{O}'_i$ lie in disjoint sets $\phi^{-1}(\mathcal{Z}_{\mathcal{O}_i} \times \mathbb{R}^m_{+})$. This completes the proof.
\end{myproof}

\subsection{Illustrating key points in the proof of Theorem~\ref{mainthm}} 

We use the example in Section~\ref{secbasicex} to illustrate some aspects of the proof of Theorem~\ref{mainthm}, especially the coordinate transformations needed prior to application of GSPT results. Recall that we begin with the following CRN (written so the reactions appear in order):
\[
\mathrm{X}+\mathrm{Y} \,\overset{\scriptstyle{k_1}}\longrightarrow\, 2\mathrm{Y}, \,\, \mathrm{Y}+\mathrm{Z}\, \overset{\scriptstyle{k_2}}\longrightarrow\, \mathrm{X},\,\,0 \, \overset{\scriptstyle{k_3}}\longrightarrow\, X\, \overset{\scriptstyle{k_4}}\longrightarrow\, 0 \, \overset{\scriptstyle{k_5}}\longrightarrow\, Y\, \overset{\scriptstyle{k_6}}\longrightarrow\, 0\, \overset{\scriptstyle{k_7}}\longrightarrow\, Z\, \overset{\scriptstyle{k_8}}\longrightarrow\, 0\,.
\]
This system gives rise to the differential equations
\begin{equation}
\label{origeq}
\left(\begin{array}{c}\dot x\\\dot y\\\dot z\end{array}\right)  =  \left(\begin{array}{rrrrrrrr}-1&1&\phantom{-}1&-1&\phantom{-}0&0&\phantom{-}0&0\\1&-1&0&0&1&-1&0&0\\0&-1&0&0&0&0&1&-1\end{array}\right)\left(\begin{array}{c}k_1xy\\k_2yz\\k_3\\k_4x\\k_5\\k_6y\\k_7\\k_8z\end{array}\right)\,.\\
\end{equation}
The $3 \times 8$ matrix on the right hand side of (\ref{origeq}) is the stoichiometric matrix of the network, which will be termed $\Gamma$, as in the proof of Theorem~\ref{mainthm}. The vector of reaction rates appearing on the right hand side of (\ref{origeq}) will be denoted by $v(x,y,z)$. Observe that the unique positive stoichiometric class of this system is the entire positive orthant. We assume, based on numerical simulations, that when the parameters $k_i$ are given the values in Section~\ref{secbasicex}, then (\ref{origeq}) indeed has a positive, linearly stable, periodic orbit which we will denote by $\mathcal{O}$. 

We now consider the enlarged system obtained by replacing
\[
\mathrm{X}+\mathrm{Y} \,\overset{\scriptstyle{k_1}}\longrightarrow\, 2\mathrm{Y}\quad \mbox{by}\quad \mathrm{X}+\mathrm{Y} \,\overset{\scriptstyle{k_1}}\longrightarrow\, \mathrm{Z} + 2\mathrm{U} \overset{\scriptstyle{k_1'}}\longrightarrow 2\mathrm{Y}\,,
\]
and
\[
X\, \overset{\scriptstyle{k_4}}\longrightarrow\, 0 \quad \mbox{by}\quad \mathrm{X} \,\overset{\scriptstyle{k_4}}\longrightarrow\, \mathrm{V} + \mathrm{W} \overset{\scriptstyle{k_4'}}\longrightarrow 0\,.
\]
It gives rise to the system of differential equations
\begin{equation}
\label{stoich1}
\left(\begin{array}{c}\dot x\\\dot y\\\dot z\\\dot u\\\dot v\\\dot w\end{array}\right)  =  \left(\begin{array}{rrrrrrrrrr}-1&1&\phantom{-}1&-1&\phantom{-}0&0&\phantom{-}0&0&0&0\\-1&-1&0&0&1&-1&0&0&2&0\\1&-1&0&0&0&0&1&-1&-1&0\\2&0&0&0&0&0&0&0&-2&0\\0&0&0&1&0&0&0&0&0&-1\\0&0&0&1&0&0&0&0&0&-1\end{array}\right)\left(\begin{array}{c}k_1xy\\k_2yz\\k_3\\k_4x\\k_5\\k_6y\\k_7\\k_8z\\k_1'zu^2\\k_4'vw\end{array}\right)\,,
\end{equation}
where the first and fourth reaction vectors are changed, and the last two reactions are those whose rates involve the new species $\mathrm{U}, \mathrm{V}$ and $\mathrm{W}$. Following the proof, and setting $k_1'=\varepsilon^{-2}$ and $k_4'=\varepsilon^{-1}$, (\ref{stoich1}) can be rewritten as
\begin{equation}
\label{stoich2}
\settowidth\mylen{100}
\begin{array}{rcl}
\left(\begin{array}{c}\dot x\\\dot y\\\dot z\\\hline\dot u\\\dot v\\\dot w\end{array}\right)  &=&  \left(\begin{array}{@{}c|c@{}}\begin{array}{rrrrrrrr}-1&1&\phantom{-}1&-1&\phantom{-}0&0&\phantom{-}0&0\\1&-1&0&0&1&-1&0&0\\0&-1&0&0&0&0&1&-1\end{array}&\begin{array}{rr}0&\phantom{-}0\\-2&0\\1&0\end{array}\\
\hline
\begin{array}{rrrrrrrr}\phantom{-}0&\phantom{-}0&\phantom{-}0&\phantom{-}0&\phantom{-}0&\phantom{-}0&\phantom{-}0&\phantom{-}0\\0&0&0&0&0&0&0&0\\0&0&0&0&0&0&0&0\end{array}&\begin{array}{rr}\phantom{-}2&\phantom{-}0\\0&1\\0&1\end{array}
\end{array}\right)
\left(\begin{array}{c}k_1xy\\k_2yz\\k_3\\k_4x\\k_5\\k_6y\\k_7\\k_8z\\\hline k_1xy-\varepsilon^{-2}zu^2\\k_4x-\varepsilon^{-1}vw\end{array}\right).\\
&=& \left(\begin{array}{cc}\Gamma&\alpha\\0&\beta\end{array}\right)\left(\begin{array}{c}v(x,y,z)\\q(x,y,z,u,v,w,\varepsilon)\end{array}\right)\,,
\end{array}
\end{equation}
where
\[
\alpha = \left(\begin{array}{rr}0&0\\-2&0\\1&0\end{array}\right), \quad \beta = \left(\begin{array}{rr}2&0\\0&1\\0&1\end{array}\right) \quad \mbox{and} \quad q = \left(\begin{array}{c}k_1xy-\varepsilon^{-2}zu^2\\k_4x-\varepsilon^{-1}vw\end{array}\right)\,.
\]
Observe that the $6 \times 10$ matrix on the right hand side of (\ref{stoich1}) is the stoichiometric matrix of the enlarged system, whereas the $6 \times 10$ matrix on the right hand side of (\ref{stoich2}), referred to as $\Gamma'$ in the proof of Theorem~\ref{mainthm}, is obtained from the stoichiometric matrix via some elementary column transformations. We can choose $\hat{\beta}$ to be the top $2 \times 2$ submatrix of $\beta$ as this has rank $2$, so that $\doublehat{\beta}$ is then the last row of $\beta$. Following the proof of the theorem, the invariant set $\mathcal{H}$ can be chosen as the hyperplane in $(x,y,z,u,v,w)$ space defined by the equation $w = 1+v$, and so restriction to $\mathcal{H}$ corresponds to fixing $w = 1+v$. The variables needed to bring the system into a desirable form are now 
\[
\left(\begin{array}{c}\mathsf{x}\\\mathsf{y}\\\mathsf{z}\end{array}\right) = \left(\begin{array}{c}x\\y\\z\end{array}\right) - \alpha\hat{\beta}^{-1}\left(\begin{array}{c}u\\v\end{array}\right) = \left(\begin{array}{c}x\\y\\z\end{array}\right) - \left(\begin{array}{rr}0&0\\-1&0\\\frac{1}{2}&0\end{array}\right)\left(\begin{array}{c}u\\v\end{array}\right) = \left(\begin{array}{c} x\\y+u\\z - \frac{1}{2} u\end{array}\right)\,,
\]
and
\[
\left(\begin{array}{c}\mathsf{u}\\\mathsf{v}\end{array}\right) = \left(\begin{array}{c}u/\varepsilon\\v/\varepsilon\end{array}\right)\,.
\]
Note that $(\mathsf{x}, \mathsf{y}, \mathsf{z})$ are collectively referred to as ``$z$'' in the proof, while $(\mathsf{u}, \mathsf{v})$ are collectively referred to as ``$w$'' in the proof. In terms of the new variables, (\ref{stoich2}) naturally appears as a singularly perturbed system:
\begin{equation}
\label{stoich5}
\begin{array}{rcl}
\left(\begin{array}{c}\dot{\mathsf{x}}\\\dot{\mathsf{y}}\\\dot{\mathsf{z}}\end{array}\right) &=& \left(\begin{array}{rrrrrrrr}-1&1&\phantom{-}1&-1&\phantom{-}0&0&\phantom{-}0&0\\1&-1&0&0&1&-1&0&0\\0&-1&0&0&0&0&1&-1\end{array}\right)\left(\begin{array}{c}k_1\mathsf{x}(\mathsf{y}-\varepsilon\mathsf{u})\\k_2(\mathsf{y}-\varepsilon\mathsf{u})(\mathsf{z} + \frac{1}{2} \varepsilon\mathsf{u})\\k_3\\k_4\mathsf{x}\\k_5\\k_6(\mathsf{y}-\varepsilon\mathsf{u})\\k_7\\k_8(\mathsf{z} + \frac{1}{2} \varepsilon\mathsf{u})\end{array}\right)\,,\\
\varepsilon\left(\begin{array}{c}\dot{\mathsf{u}}\\\dot{\mathsf{v}}\end{array}\right) &=& \left(\begin{array}{rr}2&0\\0&1\end{array}\right) \left(\begin{array}{c}k_1\mathsf{x}(\mathsf{y}-\varepsilon\mathsf{u})-(\mathsf{z} + \frac{1}{2} \varepsilon\mathsf{u})\mathsf{u}^2\\k_4\mathsf{x}-\mathsf{v}(1+\varepsilon\mathsf{v})\end{array}\right)\,.
\end{array}
\end{equation}
This is the concrete form, in this example, of the system referred to as $(A_{\varepsilon})$ in the proof above. Hence, $(A_0)$ takes the form 
\begin{equation}
\label{stoich6}
\begin{array}{rcl}
\left(\begin{array}{c}\dot{\mathsf{x}}\\\dot{\mathsf{y}}\\\dot{\mathsf{z}}\end{array}\right) &=& \left(\begin{array}{rrrrrrrr}-1&1&\phantom{-}1&-1&\phantom{-}0&0&\phantom{-}0&0\\1&-1&0&0&1&-1&0&0\\0&-1&0&0&0&0&1&-1\end{array}\right)\left(\begin{array}{c}k_1\mathsf{x}\mathsf{y}\\k_2\mathsf{y}\mathsf{z}\\k_3\\k_4\mathsf{x}\\k_5\\k_6\mathsf{y}\\k_7\\k_8\mathsf{z}\end{array}\right)\,,\\
0 &=& \left(\begin{array}{rr}2&0\\0&1\end{array}\right) \left(\begin{array}{c}k_1\mathsf{x}\mathsf{y}-\mathsf{z}\mathsf{u}^2\\k_4\mathsf{x}-\mathsf{v}\end{array}\right)\,.
\end{array}
\end{equation}
Note that in \eqref{stoich6}, $(\mathsf{x}, \mathsf{y}, \mathsf{z})$ evolve according to the original differential equation (\ref{origeq}), and hence, if we fix the rate constants $k_1, \ldots, k_8$ as before, this subsystem has a periodic orbit, presumed to be linearly stable. Rescaling time, the limiting fast time system, termed $(B_0)$ in the proof, takes the form 
\begin{equation}
\label{stoich7}
\begin{array}{rcl}
\left(\begin{array}{c}\dot{\mathsf{x}}\\\dot{\mathsf{y}}\\\dot{\mathsf{z}}\end{array}\right)  &=& 0\,,\\
\left(\begin{array}{c}\dot{\mathsf{u}}\\\dot{\mathsf{v}}\end{array}\right) &=& \left(\begin{array}{rr}2&0\\0&1\end{array}\right) \left(\begin{array}{c}k_1\mathsf{x}\mathsf{y}-\mathsf{z}\mathsf{u}^2\\k_4\mathsf{x}-\mathsf{v}\end{array}\right)\,.
\end{array}
\end{equation}
The manifold of positive equilibria of (\ref{stoich7}) is defined by 
\begin{equation}
\label{poseqmanif}
\{(\mathsf{x}, \mathsf{y}, \mathsf{z}, \mathsf{u}, \mathsf{v}) \in \mathbb{R}^5_{+}\,\colon\,\, \mathsf{u} = \sqrt{k_1 \mathsf{x} \mathsf{y}/\mathsf{z}},\,\, \mathsf{v} = k_4 \mathsf{x}\}.
\end{equation}
This is a three dimensional submanifold of $\mathbb{R}^5_{+}$. We can easily compute the nontrivial eigenvalues of the positive equilibria of (\ref{stoich7}) and find that they are, as expected, real and negative. Via the theory developed by Fenichel in \cite{Fenichel79}, for sufficiently small $\varepsilon > 0$, (\ref{stoich5}) has an invariant ``slow'' manifold, close on compact sets to a portion of the positive equilibrium manifold (\ref{poseqmanif}). Further, for sufficiently small $\varepsilon>0$, (\ref{stoich5}) has a periodic orbit ($\mathcal{O}^*_\varepsilon$ in the proof) on this slow manifold. This periodic orbit is linearly stable: this holds relative to the slow manifold, since it holds for (\ref{origeq}); and it holds relative to directions in $\mathcal{H}$ transverse to the slow manifold as a consequence of the fact that the nontrivial eigenvalues of the positive equilibria of (\ref{stoich7}) are real and negative. 

The remainder of the proof now simply tells us that (again provided $\varepsilon$ is sufficiently small) the system (\ref{stoich2}) in original coordinates $(x,y,z,u,v,w)$ has a corresponding positive periodic orbit $\mathcal{O}'_{\varepsilon}$, which is linearly stable relative to its stoichiometric class (in this case, the set defined by $w-v=1$).

\bibliographystyle{unsrt}

\begin{thebibliography}{10}

\bibitem{Tysonswitchclock}
J.~J. Tyson, R.~Albert, A.~Goldbeter, P.~Ruoff, and J.~Sible.
\newblock Biological switches and clocks.
\newblock {\em J. R. Soc. Interface}, 5:S1--S8, 2008.

\bibitem{errami2015}
H.~Errami, M.~Eiswirth, D.~Grigoriev, W.~M. Seiler, T.~Sturm, and A.~Weber.
\newblock {Detection of Hopf bifurcations in chemical reaction networks using
  convex coordinates}.
\newblock {\em J. Comput. Phys.}, 291:279--302, 2015.

\bibitem{hellrendall2016}
J.~Hell and A.~D. Rendall.
\newblock {Sustained oscillations in the MAPK cascade}.
\newblock {\em Math. Biosci.}, 282:162--173, 2016.

\bibitem{ConradiERK2}
C.~Conradi, N.~Obatake, A.~Shiu, and X.~Tang.
\newblock {Dynamics of ERK regulation in the processive limit}.
\newblock {\em J. Math. Biol.}, 82, 2021.

\bibitem{boroshofbauerdef1}
B.~Boros and J.~Hofbauer.
\newblock Limit cycles in mass-conserving deficiency-one mass-action systems.
\newblock {\em Electron. J. Qual. Theory Differ. Equ.}, 42:1--18, 2022.

\bibitem{kreusserrendall2021}
L.~M. Kreusser and A.~D. Rendall.
\newblock {Autophosphorylation and the dynamics of the activation of Lck}.
\newblock {\em Bull. Math. Biol.}, 83:64, 2021.

\bibitem{banajiborosHopf}
M.~Banaji and B.~Boros.
\newblock {The smallest bimolecular mass action reaction networks admitting
  Andronov--Hopf bifurcation}.
\newblock {\em Nonlinearity}, 36(2):1398, 2023.

\bibitem{HalSmithJMC}
H.~Smith.
\newblock Global dynamics of the smallest chemical reaction system with Hopf
  bifurcation.
\newblock {\em J. Math. Chem.}, 50, 2011.

\bibitem{boroshofbauerplanar}
B.~Boros and J.~Hofbauer.
\newblock Oscillations in planar deficiency-one mass-action systems.
\newblock {\em J. Dyn. Diff. Equat.}, 2021.

\bibitem{gedeonsontag2007}
T.~Gedeon and E.~D. Sontag.
\newblock Oscillations in multi-stable monotone systems with slowly varying
  feedback.
\newblock {\em J. Differ. Equations}, 239:273--295, 2007.

\bibitem{Conradi19175}
C.~Conradi, D.~Flockerzi, J.~Raisch, and J.~Stelling.
\newblock Subnetwork analysis reveals dynamic features of complex (bio)chemical
  networks.
\newblock {\em Proc. Natl. Acad. Sci}, 104(49):19175--19180, 2007.

\bibitem{joshishiu}
B.~Joshi and A.~Shiu.
\newblock Atoms of multistationarity in chemical reaction networks.
\newblock {\em J. Math. Chem.}, 51(1):153--178, 2013.

\bibitem{feliuwiufInterface2013}
E.~Feliu and C.~Wiuf.
\newblock Simplifying biochemical models with intermediate species.
\newblock {\em J. Roy. Soc. Interface}, 10:20130484, 2013.

\bibitem{banajipanteaMPNE}
M.~Banaji and C.~Pantea.
\newblock The inheritance of nondegenerate multistationarity in chemical
  reaction networks.
\newblock {\em SIAM J. Appl. Math.}, 78(2):1105--1130, 2018.

\bibitem{banajiCRNosci}
M.~Banaji.
\newblock {Inheritance of oscillation in chemical reaction networks}.
\newblock {\em Appl. Math. Comput.}, 325:191--209, 2018.

\bibitem{banajiCRNosci1}
M.~Banaji.
\newblock Building oscillatory chemical reaction networks by adding reversible
  reactions.
\newblock {\em SIAM J. Appl. Math.}, 80(4):1751--1777, 2020.

\bibitem{banajiboroshofbauer}
M.~Banaji, B.~Boros, and J.~Hofbauer.
\newblock Adding species to chemical reaction networks: Preserving rank
  preserves nondegenerate behaviours.
\newblock {\em Appl. Math. Comput.}, 426:127109, 2022.

\bibitem{GrossJMB}
E.~Gross, H.~Harrington, N.~Meshkat, and A.~Shiu.
\newblock Joining and decomposing reaction networks.
\newblock {\em J. Math. Biol.}, 80:1683--1731, 2020.

\bibitem{Qiao.2007aa}
L.~Qiao, R.~B. Nachbar, I.~G. Kevrekidis, and S.~Y. Shvartsman.
\newblock {Bistability and oscillations in the Huang-Ferrell model of MAPK
  signaling}.
\newblock {\em PLoS Comput. Biol.}, 3(9):e184, 2007.

\bibitem{Rosenfeld2011}
S.~Rosenfeld.
\newblock Mathematical descriptions of biochemical networks: stability,
  stochasticity, evolution.
\newblock {\em Prog. Biophys. Mol. Biol.}, 106(2):400--409, 2011.

\bibitem{obatake}
N.~Obatake, A.~Shiu, X.~Tang, and A.~Torres.
\newblock {Oscillations and bistability in a model of ERK regulation}.
\newblock {\em J. Math. Biol.}, 79:1515--1549, 2019.

\bibitem{JonesSingular}
C.~R. K.~T. Jones.
\newblock {\em Dynamical systems}, volume 1609 of {\em Lecture Notes in
  Mathematics}, chapter Geometric singular perturbation theory.
\newblock Springer, Berlin, 1995.

\bibitem{Huang.1996aa}
C.~Y. Huang and J.~E. Ferrell.
\newblock Ultrasensitivity in the mitogen-activated protein kinase cascade.
\newblock {\em Proc. Natl. Acad. Sci. USA}, 93(19):10078--10083, 1996.

\bibitem{Kholodenko.2000aa}
B.~N. Kholodenko.
\newblock Negative feedback and ultrasensitivity can bring about oscillations
  in the mitogen-activated protein kinase cascades.
\newblock {\em Eur. J. Biochem.}, 267(6):1583--1588, 2000.

\bibitem{Fenichel79}
N.~Fenichel.
\newblock Geometric singular perturbation theory for ordinary differential
  equations.
\newblock {\em J. Differ. Equations}, 31:53--98, 1979.

\bibitem{CONRADI2018507}
C.~Conradi and A.~Shiu.
\newblock Dynamics of posttranslational modification systems: Recent progress
  and future directions.
\newblock {\em Biophys. J.}, 114(3):507--515, 2018.

\end{thebibliography}

\end{document}